\newtheorem{theorem}[subsection]{Theorem}
\newtheorem{lemma}[subsection]{Lemma}
\theoremstyle{definition}
\newtheorem{definition}[subsection]{Definition}
\theoremstyle{remark}
\newtheorem{remark}[subsection]{Remark}
\title{Minimal coloring number for $\mathbb{Z}$-colorable links II}
\author{Eri Matsudo}
\address{Graduate School of Integrated Basic Sciences, Nihon University,
3-25-40 Sakurajosui, Setagaya-ku, Tokyo 156-8550, Japan}
\email{cher16001@g.nihon-u.ac.jp}
\keywords{$\mathbb{Z}$-coloring, minimal coloring number}
\subjclass[2010]{57M25}
\date{\today}
\begin{document}

\maketitle

\begin{abstract}
The minimal coloring number of a $\mathbb{Z}$-colorable link 
is the minimal number of colors for non-trivial $\mathbb{Z}$-colorings on diagrams of the link.
In this paper, we show that the minimal coloring number of any non-splittable $\mathbb{Z}$-colorable links is four. 
As an example, we consider the link obtained 
by replacing each component of the given link 
with several parallel strands, 
which we call a parallel of a link.
We show that an even parallel of a link is $\mathbb{Z}$-colorable except for the case of 2 parallels with non-zero linking number. 
We then give a simple way 
to obtain a diagram which attains the minimal coloring number 
for such even parallels of links.
\end{abstract}

\section{Introduction}

In \cite{Fox}, 
Fox introduced one of the most well-known invariants for knots and links, 
which now it is called \textit{the Fox $n$-coloring}, or simply {\it $n$-coloring} for $n\ge 2$. 

On the other hand, it is known that the links with $0$ determinants cannot admit  
Fox $n$-coloring for any $n \ge 2$. 
For such links, the $\mathbb{Z}$-coloring can be defined 
as a generalization of  the Fox $n$-coloring. 
In \cite{HK}, Haraly-Kauffman defined the minimal coloring number for Fox $n$-coloring. 
We define the minimal coloring number for $\mathbb{Z}$-coloring 
as a generalization of the minimal coloring number for Fox $n$-coloring.
See Section 2.

The minimal coloring number of any splittable $\mathbb{Z}$-colorable link is shown to be $2$.
In \cite{IM}, 
for  a non-splittable $\mathbb{Z}$-colorable link $L$ that has a diagram with a ``simple'' $\mathbb{Z}$-coloring, 
we proved that the minimal coloring number of $L$ is $4$. 
In this paper, we show that any non-splittable $\mathbb{Z}$-colorable link has a diagram with a ``simple'' $\mathbb{Z}$-coloring, and its minimal coloring number is $4$. \\

\noindent
\textbf{Theorem \ref{main}.} {\it The minimal coloring number of any non-splittable $\mathbb{Z}$-colorable link is equal to $4$.}\\

This result is also proved 
by Meiqiao Zhang, Xian'an Jin and Qingying Deng almost independently in \cite{Zhang}. 
Previously Zhang gave us her manuscript for her Master thesis. 
There she showed that if a $\mathbb{Z}$-colorable link has a diagram with a $1$-diff crossing, the link has a diagram with only $0$-diff crossings and $1$-diff crossings. 
Our proof is based on her arguments.  

In the proof of Theorem \ref{main}, 
we give a procedure to obtain a diagram with a $\mathbb{Z}$-coloring of $4$ colors 
from any given diagram with a non-trivial $\mathbb{Z}$-coloring 
of a non-splittable $\mathbb{Z}$-colorable link.
However, from a given diagram of a $\mathbb{Z}$-colorable link, 
by using the procedure given in the our proof of Theorem \ref{main}, 
the obtained diagram and $\mathbb{Z}$-coloring might be very complicated.

In Section \ref{secthm1}, we give a ``simple'' diagrams and $\mathbb{Z}$-coloring of $4$ colors 
for some particular class of $\mathbb{Z}$-colorable link. 
In fact, we consider the link obtained 
by replacing each component of the given link 
with several parallel strands, 
which we call a parallel of a link.


\section{Preliminaries}

Let us begin with the definition of $\mathbb{Z}$-coloring of link.

\begin{definition}\label{def1}
Let $L$ be a link and $D$ a regular diagram of $L$. 
We consider a map $\gamma:\{$arcs of $D\}\rightarrow \mathbb{Z}$. 
If $\gamma$ satisfies the condition 
$2\gamma(a)= \gamma(b)+\gamma(c)$ 
at each crossing of $D$ 
with the over arc $a$ and the under arcs $b$ and $c$, 
then $\gamma$ is called 
a \textit{$\mathbb{Z}$-coloring} on $D$. 
A $\mathbb{Z}$-coloring which assigns the same integer to all the arcs of the diagram 
is called the \textit{trivial $\mathbb{Z}$-coloring}. 
A link is called \textit{$\mathbb{Z}$-colorable} if it has a diagram admitting a non-trivial
$\mathbb{Z}$-coloring. 
\end{definition}

Throughout this paper, we often call the integers of the image of a $\mathbb{Z}$-coloring {\it colors}. 

We define the minimal coloring number for $\mathbb{Z}$-coloring as follows.

\begin{definition}\label{def2}
Let us consider the number of the colors for 
a non-trivial $\mathbb{Z}$-coloring on a diagram 
of a $\mathbb{Z}$-colorable link $L$. 
We call the minimum of such number of colors 
for all non-trivial $\mathbb{Z}$-colorings on diagrams of $L$ 
the \textit{minimal coloring number} of $L$, 
and denote it by $mincol_\mathbb{Z}(L)$. 
\end{definition}

In \cite{IM}, we defined a simple $\mathbb{Z}$-coloring. 

\begin{definition}
Let $L$ be a non-trivial $\mathbb{Z}$-colorable link, 
and $\gamma$ a $\mathbb{Z}$-coloring on a diagram $D$ of $L$. 
Suppose that there exists a natural number $d$ such that, at all the crossings in $D$, 
the differences between the colors of the over arcs and the under arcs are $d$ or $0$.
Then we call $\gamma$ a \textit{simple} $\mathbb{Z}$-coloring.
\end{definition}

Moreover we have proved the following result in \cite{IM}.

\begin{theorem}{\cite[Theorem 4.2]{IM}}\label{simplethm}
Let $L$ be a non-splittable $\mathbb{Z}$-colorable link. 
If there exists a simple $\mathbb{Z}$-coloring on a diagram of $L$,
then $mincol_\mathbb{Z}(L)=4$.
\end{theorem}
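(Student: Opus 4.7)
My plan to prove Theorem~\ref{simplethm} splits into the lower bound $mincol_{\mathbb{Z}}(L) \geq 4$, valid for every non-splittable $\mathbb{Z}$-colorable link, and the upper bound $mincol_{\mathbb{Z}}(L) \leq 4$, which uses the simple coloring hypothesis.

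For the lower bound, I would suppose for contradiction that some diagram of $L$ carries a non-trivial $\mathbb{Z}$-coloring with at most three colors and deduce that $L$ splits. One color is trivial. With two colors $\{a,b\}$, the relation $2x = y + z$ at each crossing forces $y = z = x$, because $a+b$ equals neither $2a$ nor $2b$; every crossing is then monochromatic, each component is monochromatic, and no crossing between components of different colors is possible, so $L$ would split. With three colors $\{a < b < c\}$, the same case analysis forces the arithmetic progression $c = 2b - a$ and restricts the only non-monochromatic crossings to over-color $b$ with under-colors $(a, c)$. Partitioning the components of $L$ into those colored entirely by $b$ and those colored only by $\{a, c\}$ (both classes being non-empty because all three colors appear), every crossing between the two classes has the $b$-class as the over-strand, so a vertical isotopy lifts the $b$-components off the $(a, c)$-components and exhibits a splitting, again contradicting non-splittability. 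Hence every non-trivial $\mathbb{Z}$-coloring of $L$ uses at least four colors.

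For the upper bound, let $\gamma$ be a simple $\mathbb{Z}$-coloring on a diagram $D$ with difference $d$. After translating and dividing by $d$ (operations that preserve both the $\mathbb{Z}$-coloring structure and the number of colors), I may assume $\gamma$ takes integer values in $\{0, 1, \ldots, N\}$ with crossing-differences in $\{0, \pm 1\}$. If $N \leq 3$, $\gamma$ already uses at most four colors. Otherwise, the plan is to exhibit a reduction step: from a pair $(D, \gamma)$ with maximum color $N \geq 4$, produce a new diagram $D'$ of $L$ carrying a simple $\mathbb{Z}$-coloring $\gamma'$ with image in $\{0, 1, \ldots, N-1\}$. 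Iterating the reduction $N - 3$ times yields a four-color coloring on a diagram of $L$, and the original scaling gives $mincol_{\mathbb{Z}}(L) \leq 4$.

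The reduction step is the main obstacle. The structural observation enabling it is that every crossing incident to an $N$-colored arc has exactly one of two forms: the monochromatic type $(N, N, N)$, or the type $(N-1; N, N-2)$ with the $N$-arc as an under-strand paired with an $(N-2)$-arc beneath an $(N-1)$-over-strand. Thus the maximal connected unions of $N$-colored arcs form isolated ``top-layer'' pieces that touch the rest of the diagram only at boundary crossings of the second type. My plan is to perform a local Reidemeister-II move at each such boundary crossing that folds the $N$-strand onto its paired $(N-2)$-strand, lowering the top layer by two levels in a way consistent with the $\mathbb{Z}$-coloring equation at every affected crossing. The technical heart of the argument will be verifying the compatibility of the local surgeries across all boundary crossings of a single $N$-piece, so that the result is a genuine diagram of $L$ together with a valid simple $\mathbb{Z}$-coloring bounded above by $N - 1$.
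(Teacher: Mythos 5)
First, note that this paper does not prove Theorem \ref{simplethm} at all: it is imported from \cite[Theorem 4.2]{IM}, so there is no in-paper argument to compare yours against. Judged on its own terms, your lower-bound half is correct and complete, and it is the standard argument: with two colors the relation $2\gamma(a)=\gamma(b)+\gamma(c)$ forces every crossing to be monochromatic, so the two monochromatic sublinks have disjoint diagrams; with three colors $a<b<c$ it forces $c=2b-a$ and forces the $b$-colored components to be the over strand at every mixed crossing, so they can be lifted off the $\{a,c\}$-colored ones. Either way $L$ splits, giving $mincol_\mathbb{Z}(L)\geq 4$.

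The gap is in the upper bound, which is the actual content of the theorem. Your structural observation is right: after normalizing the simple coloring to have difference $1$ and values in $\{0,\dots,N\}$, every crossing meeting an $N$-colored arc is either of type $(N,N,N)$ or has over arc colored $N-1$ and under arcs colored $N$ and $N-2$. But the reduction step you build on this is not carried out, and as described it fails at exactly the crossings that matter. If you lower an entire $N$-colored piece to $N-2$, each boundary crossing becomes ``over arc $N-1$, under arcs $N-2$ and $N-2$,'' violating the coloring relation; and since in a simple coloring with difference $1$ an under strand can only change color by $0$ or $\pm 2$ (so each component carries colors of a single parity), no insertion of Reidemeister II moves along that under strand can bring it to the color $N-1$ needed to make the crossing a $0$-diff crossing, nor keep it at $N-2$ on both sides, without reintroducing the color $N$. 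Any working modification must therefore also rearrange or recolor the $(N-1)$-colored over arcs, typically by pairing up boundary crossings and running a band of Reidemeister II moves between them --- which is precisely the kind of figure-by-figure surgery that \cite{IM} supplies for this theorem and that the proof of Theorem \ref{main} supplies in the non-simple situation. The ``compatibility of the local surgeries'' that you defer is not a routine check but the entire proof, so as it stands the inequality $mincol_\mathbb{Z}(L)\leq 4$ is asserted rather than proved.
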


\section{Main theorem}

In this section, we prove Theorem \ref{main}. 

\begin{theorem}\label{main}
The minimal coloring number of any non-splittable $\mathbb{Z}$-colorable link is equal to $4$.
\end{theorem}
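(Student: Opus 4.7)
The strategy is to reduce to Theorem \ref{simplethm}, which already settles the conclusion whenever the link admits a diagram with a simple $\mathbb{Z}$-coloring. So it suffices to produce, for every non-splittable $\mathbb{Z}$-colorable link $L$, some diagram of $L$ that carries a simple $\mathbb{Z}$-coloring, i.e.\ one whose diff at every crossing lies in $\{0,d\}$ for a single natural number $d$. Starting from any diagram $D$ of $L$ with a non-trivial $\mathbb{Z}$-coloring $\gamma$, I define the \emph{diff} at a crossing with over-arc $a$ and under-arcs $b,c$ to be $|\gamma(a)-\gamma(b)|=|\gamma(a)-\gamma(c)|$. Since $\gamma$ can be rescaled to $\gamma/g$ for any common divisor $g$ of the diffs, I normalize so that the greatest common divisor of all diffs in $D$ equals $1$. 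In particular, after this rescaling $D$ contains at least one $1$-diff crossing.

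The main task is then to modify $(D,\gamma)$, by Reidemeister moves and a simultaneous compatible extension of $\gamma$ across newly created arcs, until every crossing has diff $0$ or $1$. Following the manuscript of Zhang acknowledged in the introduction, the key local operation takes a crossing of diff $k\geq 2$, together with a nearby arc whose color differs by $1$ from one of the strands at that crossing, and performs Reidemeister II/III moves to replace the $k$-diff crossing by a cluster of crossings each of strictly smaller diff. Iterating this move on the (finite) set of crossings of diff at least $2$, with a monovariant such as the sum of all diffs, one arrives at a diagram $D'$ with $\mathbb{Z}$-coloring $\gamma'$ whose crossings all have diff $0$ or $1$. By construction $\gamma'$ is a simple $\mathbb{Z}$-coloring of $L$, and Theorem \ref{simplethm} then gives $mincol_\mathbb{Z}(L)=4$.

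The hard part is the local reduction step. A priori, the $1$-diff arc required to reduce a given $k$-diff crossing lives far away in $D$, so one has to transport it through the diagram by planar isotopy and Reidemeister moves without spoiling the coloring or inflating diffs elsewhere. This is exactly where non-splittability of $L$ is indispensable: it forbids a separating curve that isolates the portion of $D$ containing a $1$-diff crossing from the portion containing a $k$-diff crossing, so there is always a sequence of crossings linking the two regions along which the needed $1$-diff structure can be propagated. Making this propagation explicit and verifying that it does not create new crossings of diff $\geq 2$ is the technical core of the proof, and this is where the detailed combinatorial case analysis from Zhang's argument is used.
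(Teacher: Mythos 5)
Your overall plan --- reduce to Theorem \ref{simplethm} by converting an arbitrary non-trivial $\mathbb{Z}$-coloring into a simple one via local diagram modifications --- is indeed the strategy of the paper, but the proposal has genuine gaps precisely where the content of the proof should be. First, the normalization step is incorrect as stated: dividing $\gamma$ by the greatest common divisor of all the diffs makes that gcd equal to $1$, but it does not produce a $1$-diff crossing (the diffs could be, say, $2$ and $3$). Your reduction scheme is built around the presence of a $1$-diff arc near a $k$-diff crossing, so it starts from a premise that need not hold. The paper avoids this entirely: it lets $d_m$ be the maximal diff, finds a path from a $d_m$-diff crossing through $0$-diff crossings to a crossing of some diff $d$ with $0<d<d_m$ (such a $d$ exists exactly because the coloring is not simple, and the path exists because a diagram of a non-splittable link is connected), and replaces the $d_m$-diff crossing by crossings of diffs $|d_m-d|$ and $|d_m-2d|$, both strictly less than $d_m$; induction on $d_m$ then terminates in a simple coloring.

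Second, your proposed monovariant --- the sum of all diffs --- is not justified and is unlikely to work: the local move replaces one large-diff crossing by a cluster of smaller-diff crossings, so the total sum can easily increase. The quantity that actually decreases is the maximal diff $d_m$. Finally, and most importantly, you explicitly defer the local reduction step (``the technical core'') to Zhang's case analysis without supplying it. That step --- the classification of the connecting path into the four types of Figure \ref{dm} and the explicit modifications of the diagram and coloring in Figures \ref{fig1}--\ref{fig4-2} --- is essentially the entire proof; without it, what you have is an outline of the intended argument rather than a proof.
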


In Zhang's thesis, 
she calls a crossing an {\it $n$-diff crossing} 
if $|b-a|$ and $|b-c|$ are equal to $n$, 
where the over arc is colored by $b$ and the under arcs are colored by $a$ 
and $c$ by a $\mathbb{Z}$-coloring $\gamma$ at the crossing. 
We also use this notion in our proof.

\begin{proof}[Proof of Theorem \ref{main}]
Let $L$ be a non-splittable $\mathbb{Z}$-colorable link.
If the link $L$ admits a simple $\mathbb{Z}$-coloring, 
by Theorem \ref{simplethm}, we see that $mincol_\mathbb{Z}(L)=4$.

Here let $D$ be a diagram of $L$ with non-simple $Z$-coloring. 
We define $d_m$ as the maximum of the set $\{0, d_1, d_2, \cdots, d_m\}$ 
such that $D$ has $d_i$-diff crossings for $i=1, 2, 3, \cdots$. 
We can find a path on $D$ from a $d_m$-diff crossing 
to a $d$-diff crossing passing only $0$-diff crossings with $0<d<d_m$. 
Such a path is one of the $4$ types [1], [2], [3] and [4] illustrated in Figure \ref{dm}. 

In all the figures in this proof, 
the crossing with $n$ inside a circle is an $n$-diff crossing.

\begin{figure}[H]
\begin{center}
\includegraphics[height=5cm,clip]{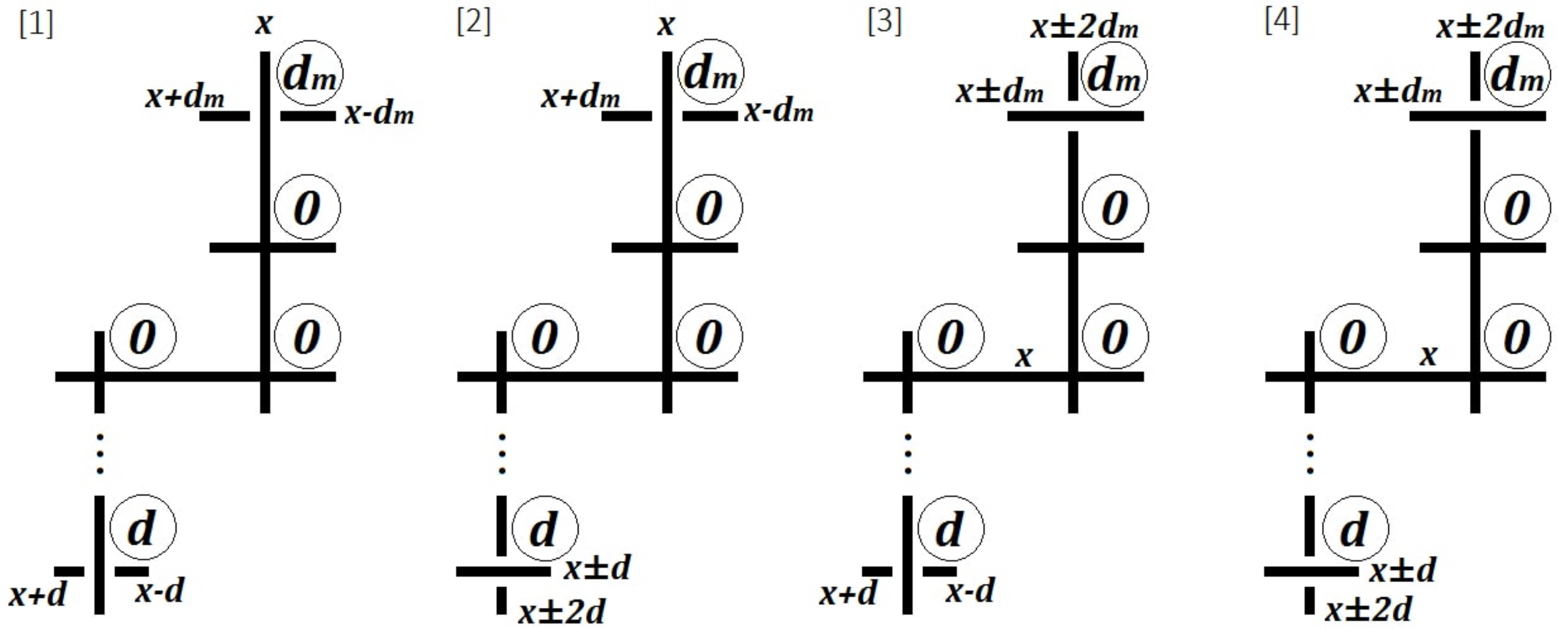}
\caption{}\label{dm}
\end{center}
\end{figure}

In the following, 
for a path from a $d_m$-diff crossing in Figure \ref{dm}, 
we will modify the diagram and the coloring to eliminate the $d_m$-diff crossing.

For a path of type [1], we modify the diagram and the coloring as shown in Figure \ref{fig1}.
\begin{figure}[H]
\begin{center}
\includegraphics[height=8cm,clip]{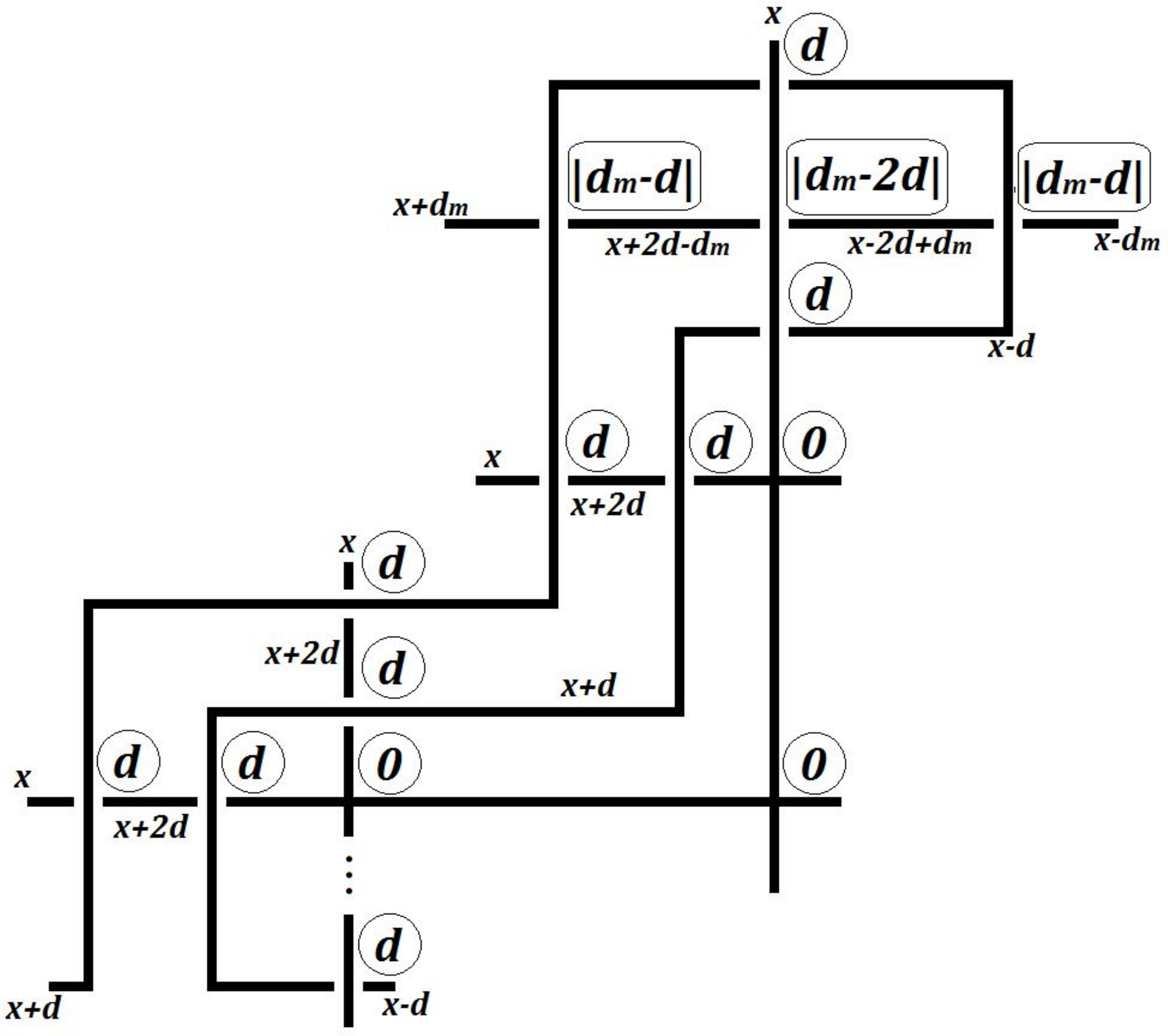}
\caption{}\label{fig1}
\end{center}
\end{figure}

For a path of type [2], we modify the diagram and the coloring as shown in Figure \ref{fig2-1} or \ref{fig2-2}.
\begin{figure}[H]
\begin{center}
\includegraphics[height=8cm,clip]{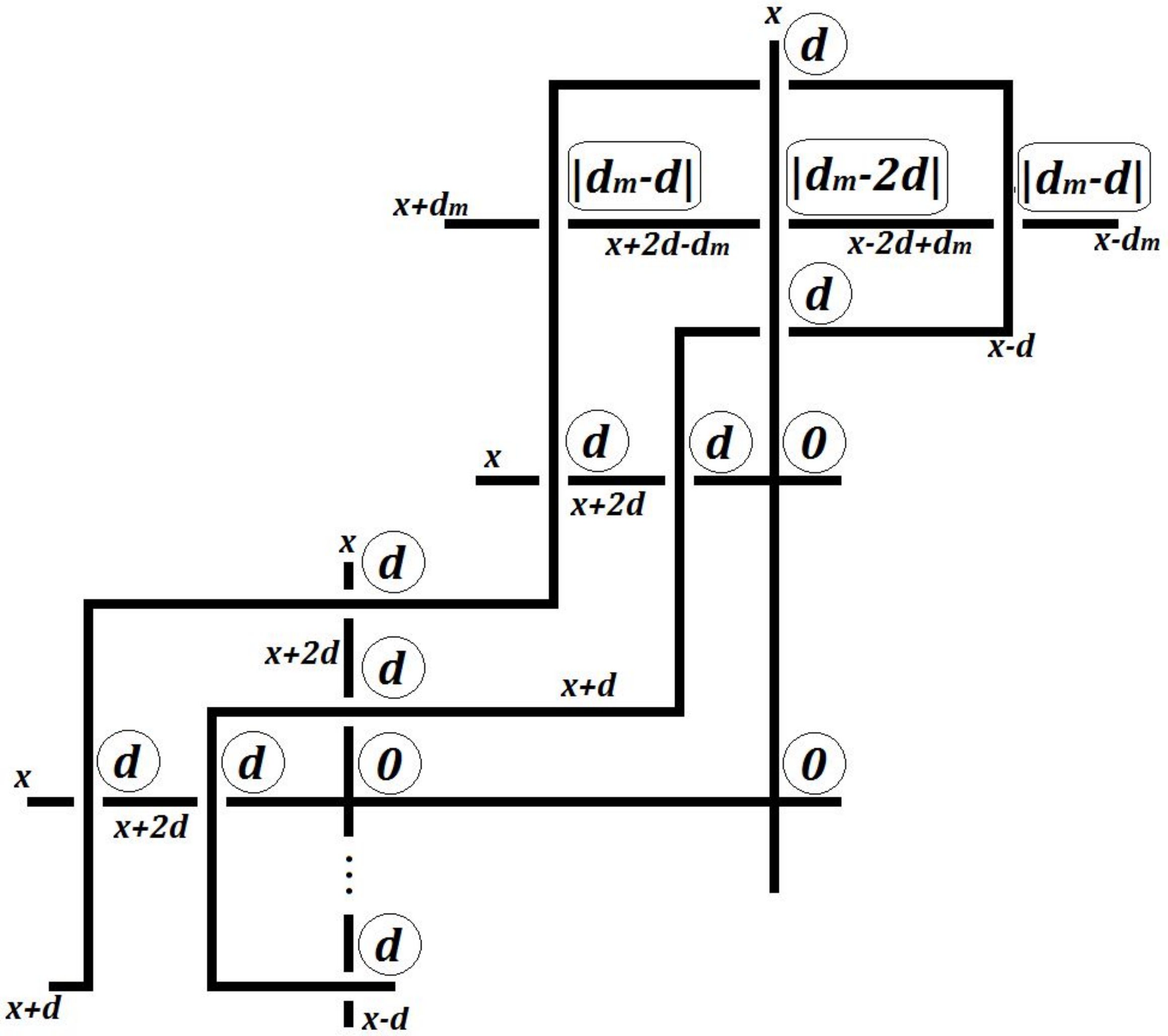}
\caption{}\label{fig2-1}
\end{center}
\end{figure}

\begin{figure}[H]
\begin{center}
\includegraphics[height=8cm,clip]{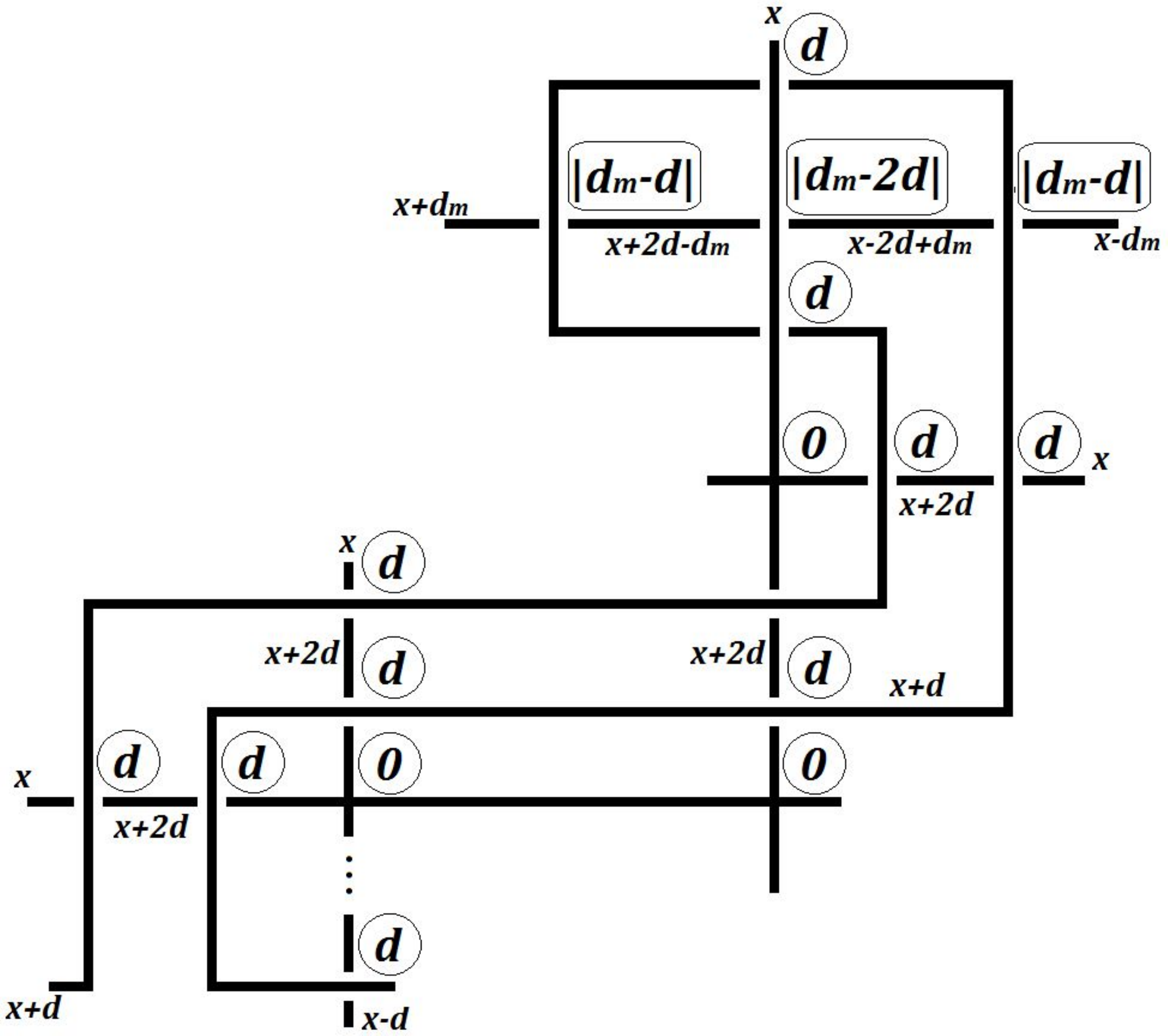}
\caption{}\label{fig2-2}
\end{center}
\end{figure}

For a path of type [3], modify the diagram and the coloring as shown in Figure \ref{fig3}.
\begin{figure}[H]
\begin{center}
\includegraphics[height=8cm,clip]{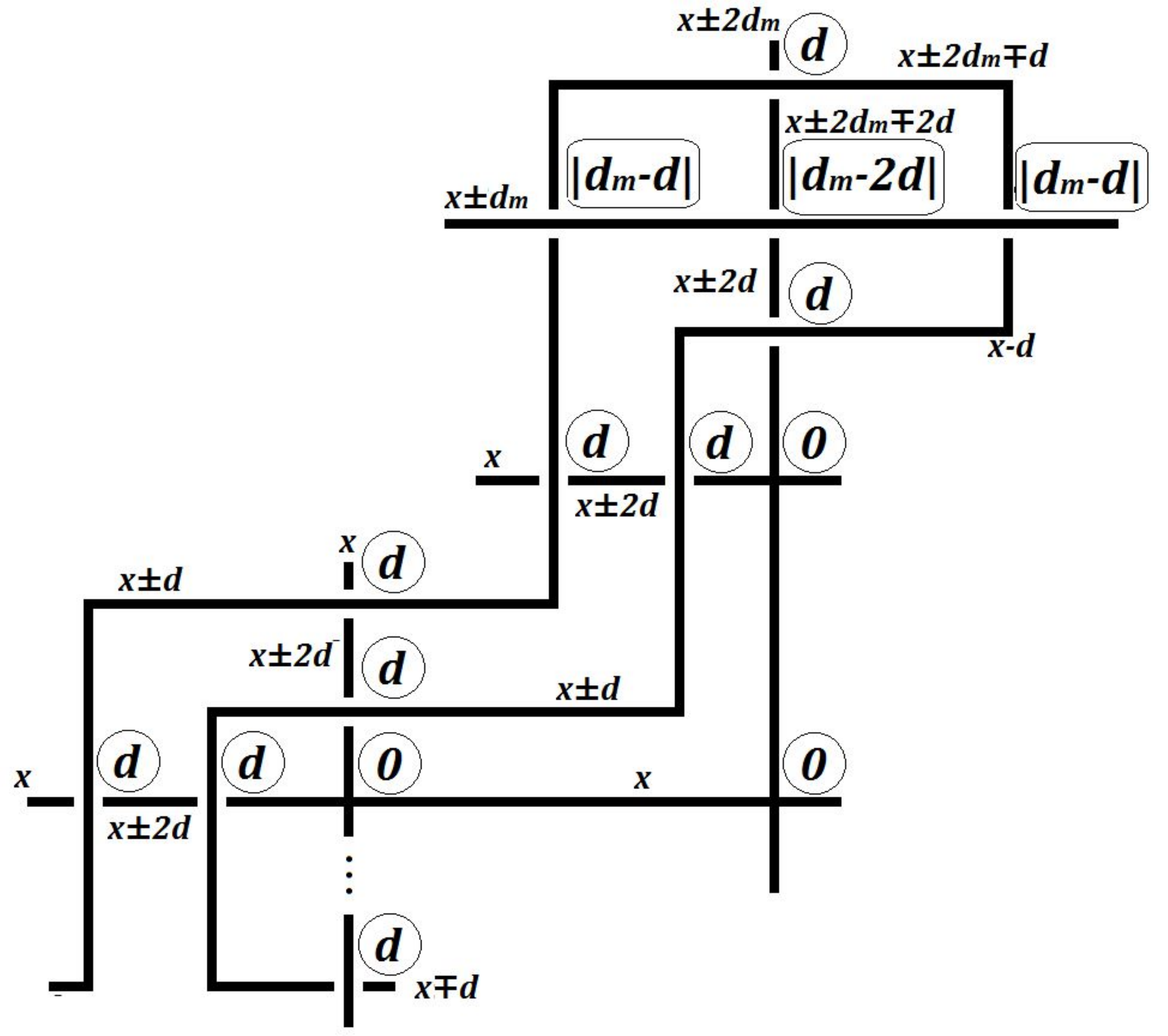}
\caption{}\label{fig3}
\end{center}
\end{figure}

For a path of type [4], modify the diagram and the coloring as shown in Figure \ref{fig4-1} or \ref{fig4-2}.
\begin{figure}[H]
\begin{center}
\includegraphics[height=8cm,clip]{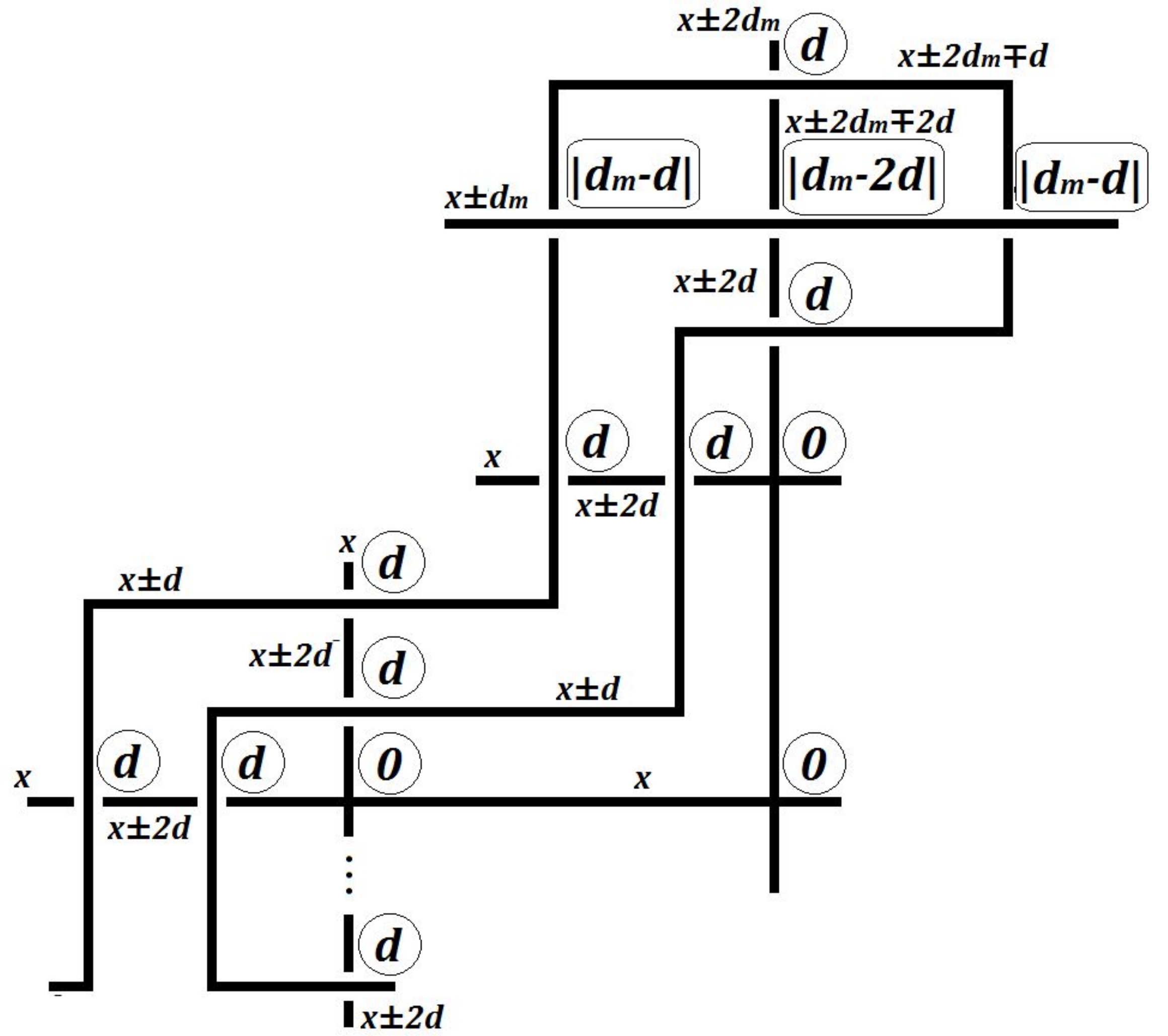}
\caption{}\label{fig4-1}
\end{center}
\end{figure}

\begin{figure}[H]
\begin{center}
\includegraphics[height=8cm,clip]{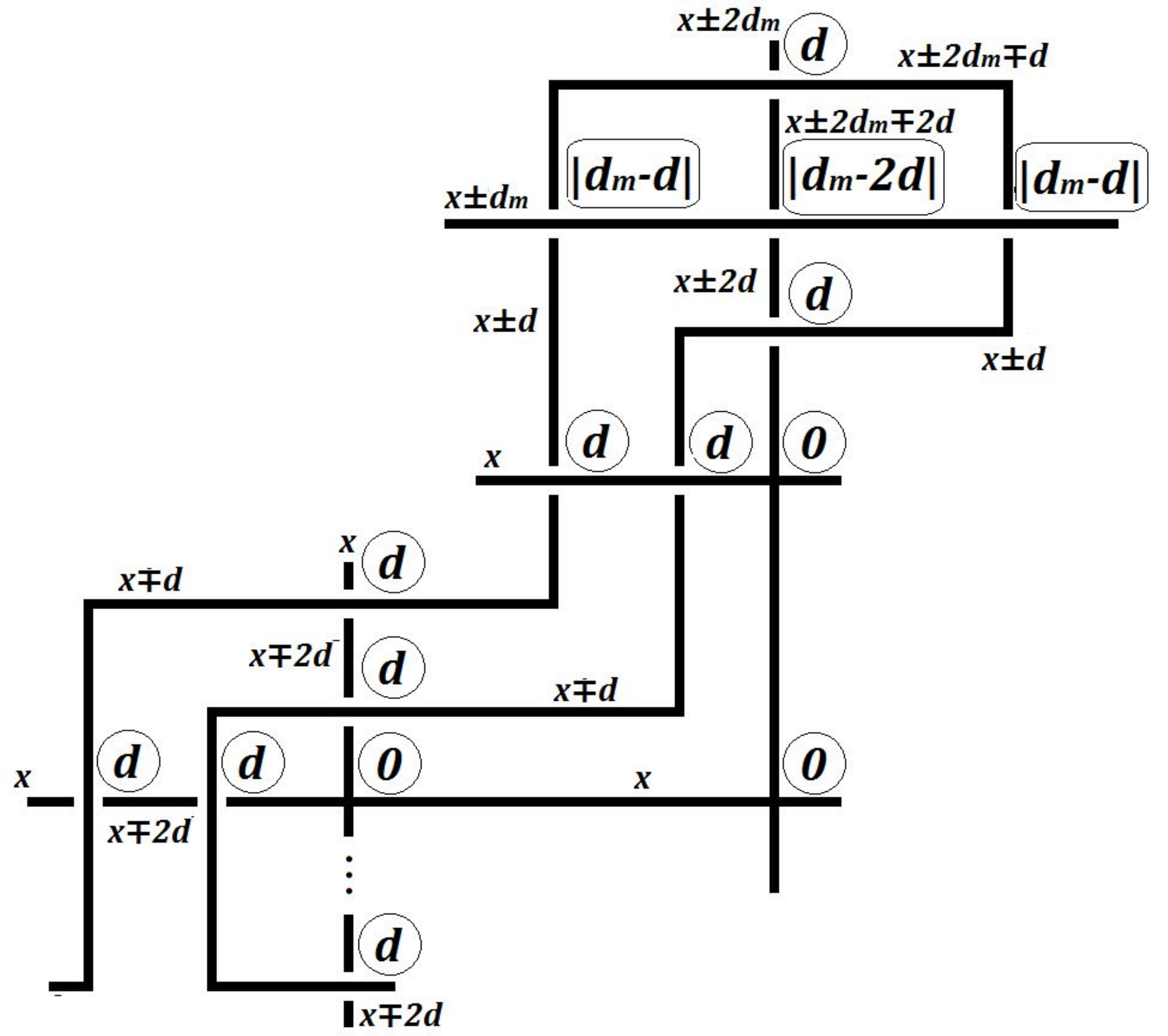}
\caption{}\label{fig4-2}
\end{center}
\end{figure}

Here the obtained diagram has $|d_m-d|$-diff crossings, $|d_m-2d|$-diff crossings 
and no $d_m$-diff crossings.
From $0<d<dm$, we see that $|d_m-d|$ and $|d_m-2d|$ are less than $d_m$. 
By the induction for $dm$, 
$L$ has a diagram with a $\mathbb{Z}$-coloring 
such that has only $0$-diff crossings and $\alpha$-diff crossings for $\alpha>0$. 
That is $L$ admits a simple $\mathbb{Z}$-coloring. 
By Theorem \ref{simplethm}, we conclude that $mincol_\mathbb{Z}(L)=4$.
\end{proof}

\begin{remark}
By Theorem \ref{main}, 
any non-splittable $\mathbb{Z}$-colorable link has a diagram
with a $\mathbb{Z}$-coloring of $4$ colors. 
However, from a given diagram of a $\mathbb{Z}$-colorable link, 
by using the procedure given in the our proof of Theorem \ref{main}, 
the obtained diagram and $\mathbb{Z}$-coloring might be very complicated.
\end{remark}

\section{Even parallels}\label{secthm1}

In this section, 
we give a simple way 
to obtain a diagram which attains the minimal coloring number 
for a particular family of $\mathbb{Z}$-colorable links.  
That is, we consider the link obtained 
by replacing each component of the given link 
with several parallel strands, 
which we call a parallel of a link, as follows. 

\begin{definition}
Let $L=K_1\cup\cdots\cup K_c$ be a link with $c$ components 
and $D$ a diagram of $L$. 
For a set $(n_1,\cdots ,n_c)$ of integers $n_i\geq 1$, 
we denote by $D^{(n_1,\cdots ,n_c)}$ the diagram obtained 
by taking $n_i$-parallel copies of the $i$-th component $K_i$ of $D$ 
on the plane for $1\leq i\leq c$.
The link $L^{(n_1,\cdots ,n_c)}$ represented by $D^{(n_1,\cdots ,n_c)}$ is called  
{\it the $(n_1,\cdots ,n_c)$-parallel of $L$}. 
Where $L$ is a knot, that is $c=1$, we call $(n)$-parallel $L^{(n)}$ simply a $n$-parallel, and denote it by $L^n$. 
We call a $2$-parallel of a knot {\it untwisted} 
if the linking number of the $2$ components of the parallel is $0$.
\end{definition}

Examples of $(n_1,\cdots ,n_c)$-parallels of links 
are shown in Figure \ref{hopf} and Figure \ref{trefoil}.

\begin{figure}[H]
\begin{center}
\includegraphics[width=8cm]{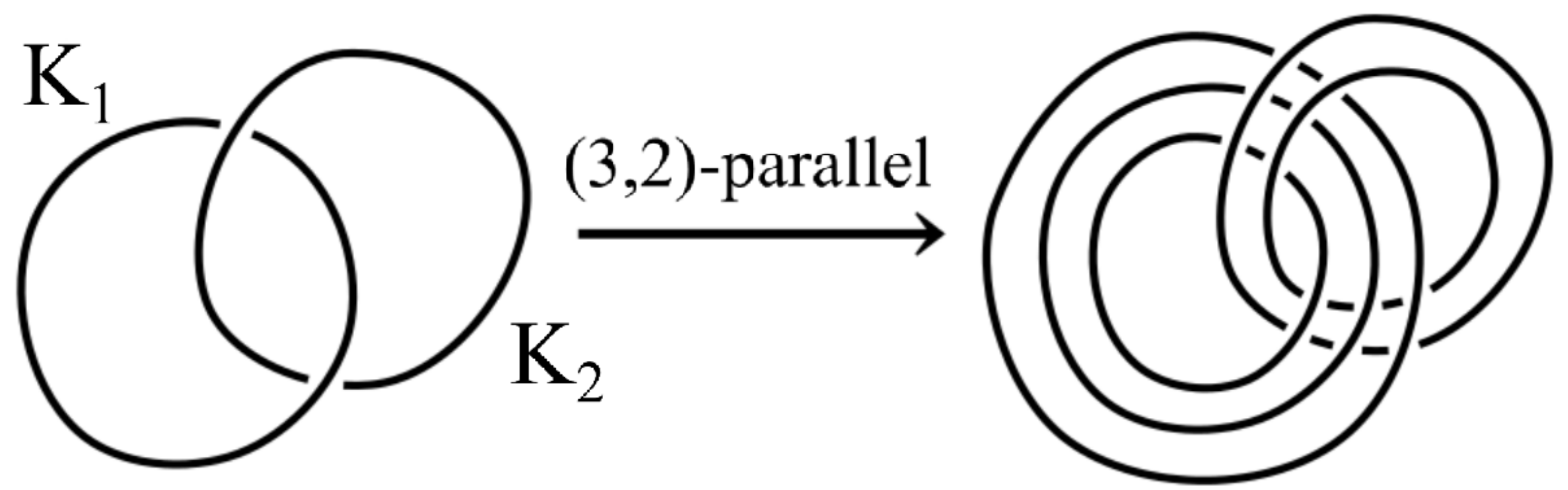}
\caption{A $(3,2)$-parallel of the Hopf link}\label{hopf}
\end{center}
\end{figure}
\begin{figure}[H]
\begin{center}
\includegraphics[width=7cm]{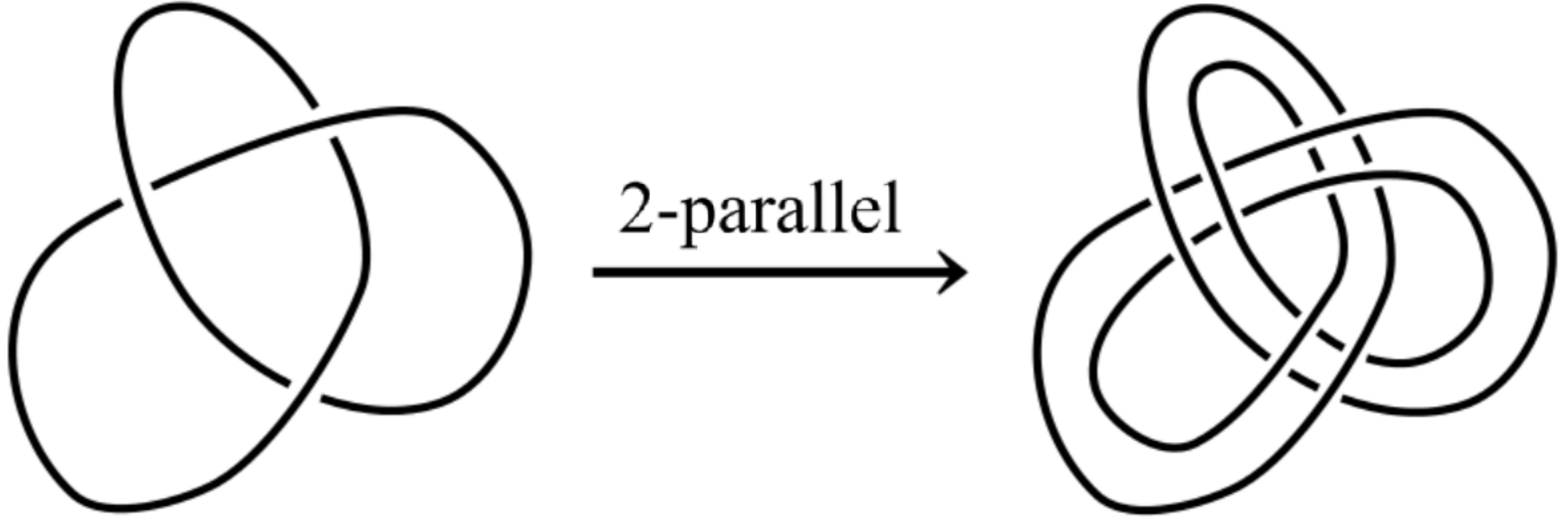}
\caption{A $2$-parallel of the trefoil}\label{trefoil}
\end{center}
\end{figure}

We show that an even parallel of a link is $\mathbb{Z}$-colorable except for the case of 2 parallels with non-zero linking number. 

\begin{theorem}\label{thmparallel}
[1] For a non-trivial knot $K$ 
and any diagram $D$ of $K$ that the writhe is $0$, 
$D^{2}$ always represents a $\mathbb{Z}$-colorable link. 
Moreover, there exists a diagram $D_0$ of $K$ 
such that $D_0^{2}$ is locally equivalent 
to a minimally $\mathbb{Z}$-colorable diagram. 

\noindent
[2] Let $L$ be a non-splittable $c$-component link  
and $D$ any diagram of $L$. 
For any even number $n_1,\cdots,n_c$ at least $4$, 
$D^{(n_1,\cdots ,n_c)}$ always represents a $\mathbb{Z}$-colorable link 
and is locally equivalent to a minimally $\mathbb{Z}$-colorable diagram.
\end{theorem}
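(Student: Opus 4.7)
The plan is to put an explicit $\mathbb{Z}$-coloring on the parallel diagram, to make it \emph{simple} in the sense of \cite{IM} whenever possible, and then to invoke Theorem~\ref{simplethm} or Theorem~\ref{main}. The key local computation, which I carry out first, is at a single crossing of $D$ where the $i$-th component passes over the $j$-th: the parallel diagram there is an $n_i\times n_j$ grid of crossings, and a short induction shows that any under strand entering with color $b$ exits with color $b+2\sum_{t=1}^{n_i/2}(c_{2t}-c_{2t-1})$, where $c_1,\ldots,c_{n_i}$ are the over-strand colors in the order encountered. Hence, if I label the $n_i$ over-strands of each arc of the $i$-th component by a fixed tuple $\lambda^{(i)}=(\lambda^{(i)}_1,\ldots,\lambda^{(i)}_{n_i})$ plus an arc-dependent shift $A$, the shift $A$ is moved by $\pm 2\Lambda^{(i)}$ across every crossing of $D$, with $\Lambda^{(i)}=\sum_t(\lambda^{(i)}_{2t}-\lambda^{(i)}_{2t-1})$, and a consistent global $A$ exists exactly when these increments telescope around each component.

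For Part~[1] I take $n=2$ and $\lambda=(0,1)$, so $\Lambda=1$; the consistency condition becomes $\mathrm{wr}(D)=0$, which is the hypothesis, and the construction produces a non-trivial $\mathbb{Z}$-coloring of $D^2$. To exhibit $D_0$, I start from any diagram of $K$ and add Reidemeister~I kinks until its writhe is zero. The non-triviality of $K$ implies that $K^2$ is non-splittable (the two parallel copies cobound an essential annulus), so Theorem~\ref{main} gives $mincol_\mathbb{Z}(K^2)=4$; the ``locally equivalent to a minimally $\mathbb{Z}$-colorable diagram'' conclusion is then the statement that the procedure in the proof of Theorem~\ref{main} converts the constructed coloring into a four-color one by local moves.

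For Part~[2], since each $n_i\ge 4$ is even, I pick a non-constant $\{0,1\}$-valued tuple $\lambda^{(i)}$ with $\Lambda^{(i)}=0$, for example the ``block-paired'' $\lambda^{(i)}=(0,0,1,1,0,0,\ldots,0,0)$ in which every pair $(\lambda^{(i)}_{2t-1},\lambda^{(i)}_{2t})$ has equal entries and at least one pair equals $(1,1)$. All increments then vanish, so $A\equiv 0$ gives a globally consistent non-trivial $\mathbb{Z}$-coloring of $D^{(n_1,\ldots,n_c)}$ for \emph{every} $D$. A direct inspection of one $n_i\cdot n_j$-grid, using that consecutive pairs in the encounter order have equal over-colors and therefore the update $b\mapsto 2o-b$ applied twice returns to $b$, shows that every intermediate under-strand color lies in $\{-1,0,1,2\}$ and every individual crossing is a $0$-diff or $1$-diff crossing. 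Hence the coloring is simple; non-splittability of $L^{(n_1,\ldots,n_c)}$ is inherited from that of $L$; and Theorem~\ref{simplethm} then yields $mincol_\mathbb{Z}(L^{(n_1,\ldots,n_c)})=4$ together with the local-equivalence statement.

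The main obstacle is the simultaneous requirement $\Lambda^{(i)}=0$ and $\lambda^{(i)}$ non-constant: it fails for $n_i=2$, which is exactly why Part~[1] needs both the writhe hypothesis and a specifically chosen $D_0$, and it is solvable precisely for even $n_i\ge 4$, which is what lets Part~[2] work uniformly in $D$.
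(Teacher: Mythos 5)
Your local computation (the update $b\mapsto 2o-b$, the telescoping shift $2\Lambda^{(i)}$, and the resulting consistency condition) is correct, and your Part~[2] is essentially the paper's argument in a slightly cleaner form. The paper also colors each parallel family by a fixed $\{0,1\}$-pattern with $\Lambda^{(i)}=0$ (the two middle strands get $1$, the rest $0$), but that pattern straddles a pair boundary when $n_i\equiv 0\pmod 4$, which produces the extra color $3$ and forces one additional local move (Figure~\ref{2-del3}) to reach four colors; your block-paired pattern keeps every intermediate color aligned with its pair and yields a simple four-color coloring on $D^{(n_1,\dots,n_c)}$ itself, after which non-splittability (Lemma~\ref{lem11}) and Theorem~\ref{simplethm} give minimality, and local equivalence holds vacuously.

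The genuine gap is in the ``Moreover'' clause of Part~[1]. Your coloring of $D^2$ assigns to each parallel pair the colors $(A,A+1)$, where the shift $A$ is a partial sum of $\pm 2$'s, one for each under-pass of $D$. Writhe zero forces only the \emph{total} sum to vanish; the partial sums can be arbitrarily large, so the coloring may use arbitrarily many colors and has crossings of arbitrarily large diff. In particular it is not simple, so Theorem~\ref{simplethm} does not apply, and you fall back on the procedure of Theorem~\ref{main}. But nothing in Theorem~\ref{main} asserts that its output is \emph{locally equivalent} to the input: that procedure modifies the diagram and coloring along a path running from a $d_m$-diff crossing to a $d$-diff crossing through the rest of the diagram, and is iterated, so its moves are not confined to mutually disjoint open sets in any evident way. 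Hence you have established the first claim of [1] (that $D^2$ is $\mathbb{Z}$-colorable) and that $mincol_\mathbb{Z}(K^2)=4$, but not the local-equivalence claim. The paper closes exactly this gap by hand: since the writhe is $0$, the positive and negative crossings of $D$ pair up, and cancelling full twists are inserted into the $2$-cable inside small disks (Figures~\ref{r1a} and \ref{r1b}); this pins the shift down so that the colors stay in $\{-1,0,1,2,3,4\}$, and two further explicit local moves (Figures~\ref{del4} and \ref{del-1}) delete $4$ and $-1$, leaving $\{0,1,2,3\}$. All modifications live in disjoint disks, which is what local equivalence requires. Your argument can be repaired by the same device, for instance by inserting each compensating Reidemeister~I kink of $D_0$ immediately after the crossing whose $\pm 2$ shift it is meant to cancel, rather than placing the kinks arbitrarily.
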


Here we give the definitions used in Theorem \ref{thmparallel}.

\begin{definition}
Let $L$ be a $\mathbb{Z}$-colorable link, and $D$ a diagram of $L$. 
$D$ is called a {\it minimally $\mathbb{Z}$-colorable diagram} 
if there exists a $\mathbb{Z}$-coloring $\gamma$ on $D$ 
such that the image of $\gamma$ is equal to the minimal coloring number of $L$. 
\end{definition}

\begin{definition}
For diagrams $D$ and $D'$ of $L$, 
$D$ is {\it locally equivalent} to $D'$ 
if there exist mutually disjoint open subsets on $\mathbb{R}^2$ 
$U_1, U_2, \cdots ,U_n$ such that 
$D'$ is obtained from $D$ by Reidemeister moves only in $\bigcup_{i=1}^m U_i$. 
\end{definition}

%

To prove Theorem \ref{thmparallel} [1], 
we prepare the next lemma about the linking number of components of $2$-parallel of a knot.

\begin{lemma}\label{lem1}
Let $D$ a diagram of a knot $K$. 
For a 2-parallel $K^2=K_1\cup K_2$ represented by $D^2$, 
the linking number of $K_1$ and $K_2$ is equal to the writhe of $D$.
\end{lemma}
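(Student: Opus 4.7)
The plan is to compute the linking number directly by comparing crossings of $D^2$ with crossings of $D$. Recall that for an oriented link, $\mathrm{lk}(K_1,K_2)$ equals half the sum of the signs of the crossings between $K_1$ and $K_2$ (the \emph{mixed} crossings), while the writhe $w(D)$ is the sum of signs of all self-crossings of $D$. So the task reduces to a local bookkeeping at each crossing of $D$.

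First I would set up notation: at every crossing $c$ of $D$, a small neighborhood contains exactly one over-arc $\alpha$ and one under-arc $\beta$ of $D$. When we take the $2$-parallel, $\alpha$ is replaced by two parallel over-strands $\alpha_1,\alpha_2$ (one on each of $K_1,K_2$) and $\beta$ by two parallel under-strands $\beta_1,\beta_2$. Hence the neighborhood of $c$ in $D^2$ contains exactly $2\times 2=4$ crossings, one for each pair $(\alpha_i,\beta_j)$.

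Next I would classify these four crossings by component. Two of them, $(\alpha_i,\beta_i)$ for $i=1,2$, are self-crossings of $K_i$, while the remaining two, $(\alpha_1,\beta_2)$ and $(\alpha_2,\beta_1)$, are mixed crossings between $K_1$ and $K_2$. Because the parallel copies lie on the plane and inherit the orientation and over/under information of the original strands, all four of these crossings have the same sign as $c$. In particular, $c$ contributes $2\,\mathrm{sgn}(c)$ to the signed count of mixed crossings of $D^2$.

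Summing over all crossings of $D$ and halving,
\[
\mathrm{lk}(K_1,K_2)
= \tfrac{1}{2}\!\!\sum_{\text{mixed crossings of }D^2}\!\!\mathrm{sgn}
= \tfrac{1}{2}\sum_{c\in D} 2\,\mathrm{sgn}(c)
= \sum_{c\in D}\mathrm{sgn}(c)
= w(D),
\]
which is the claim. There is no real obstacle here; the only thing to be careful about is verifying that the orientations of the two parallel copies are indeed consistent (so that signs of crossings are preserved), but this is immediate from the planar parallel construction in the definition of $D^2$.
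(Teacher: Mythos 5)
Your proposal is correct and follows essentially the same route as the paper: a local analysis showing each crossing of $D$ yields four crossings in $D^2$, of which two are mixed crossings of the same sign as the original, so that halving the signed count of mixed crossings recovers the writhe. Your write-up is in fact more explicit than the paper's about the orientation/sign bookkeeping, which the paper delegates to a figure.
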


\begin{proof}
Any crossing $c$ on $D$ is replaced by four crossings by taking $2$-parallel copies. 
In the four crossings, 
the two crossings consist of the arcs of only $K_1$ or $K_2$, 
and the other two crossings both $K_1$ and $K_2$. 
\begin{figure}[H]
\begin{center}
\includegraphics[width=7cm]{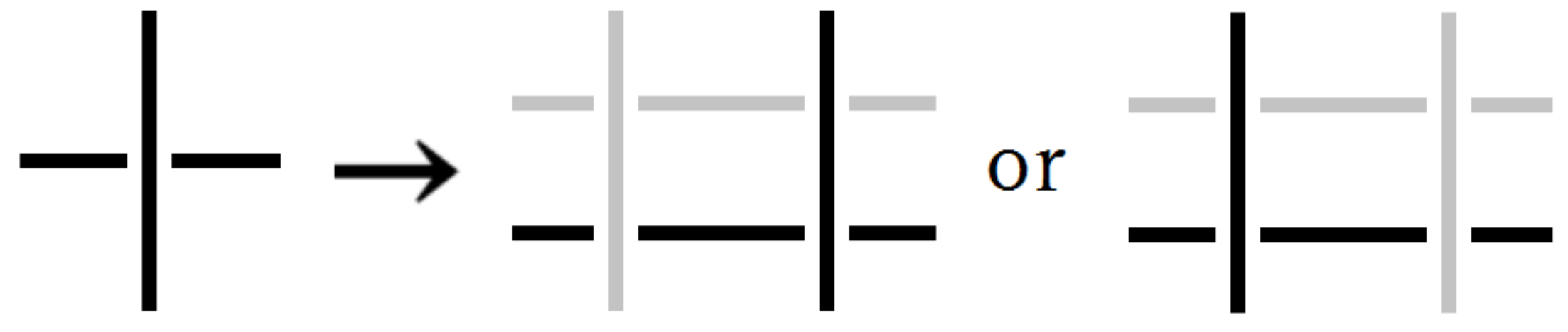}
\caption{}\label{lem1fig}
\end{center}
\end{figure}
Then $c$ and the crossings constructed by the arcs of different components have same sign. 
See Figure \ref{lem1fig}.
Therefore the linking number of $K_1$ and $K_2$ is equal to the writhe of $D$.
\end{proof}

For non-splittability of parallels of knots and links, 
we can also show the next.

\begin{lemma}\label{lem11}
[1] Any $n$-parallel of a non-trivial knot is non-splittable.
[2] Any $(n_1,\cdots ,n_c)$-parallel of a non-splittable link is non-splittable.
\end{lemma}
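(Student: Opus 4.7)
The plan is to argue both parts by contradiction. Assume a splitting sphere $S$ exists; the key tool is the annulus $A$ cobounded by two adjacent parallel copies of the same pattern component, on which I minimize intersections with $S$ via standard innermost-disk moves.

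For Part [1], the $n \ge 2$ components of $K^n$ all lie as parallel essential curves on the torus $T = \partial N(K)$, so pigeonhole forces some adjacent pair $K_j, K_{j+1}$ onto opposite sides of $S$. They cobound an annulus $A \subset T$ with interior disjoint from the other parallels, so $S \cap A \neq \emptyset$. I would isotope $S$ in $S^3 \setminus K^n$ to minimize $|S \cap A|$, eliminate inessential circles of $S \cap A$ by pushing $S$ across the disk each one bounds on $A$, and then look at any surviving essential circle. Choosing one that is innermost on $S$, the disk $D_S \subset S$ it bounds has interior disjoint from $K^n$; capping $D_S$ off with the sub-annulus of $A$ from $\partial D_S$ to $K_j$ produces an embedded disk in $S^3$ bounded by $K_j$ with interior disjoint from $K^n$. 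Since $K_j$ is isotopic to $K$, this forces $K$ to be unknotted, contradicting non-triviality. Hence $S \cap A = \emptyset$, so $K_j$ and $K_{j+1}$ lie on the same side of $S$ — a contradiction.

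For Part [2], I would split into two cases. If for every $i$ all parallels of $L_i$ lie on one side of $S$, selecting one representative per family produces a sublink isotopic to $L$ that is still split by $S$, directly contradicting non-splittability of $L$. Otherwise some $L_i$ has adjacent parallels on opposite sides of $S$, and the annulus argument from Part [1] produces an embedded disk bounded by some parallel $L_i^{(j)}$ with interior disjoint from the whole parallel link. An ambient isotopy in $N(L_i)$ sliding $L_i^{(j)}$ onto $L_i$ (hence fixing every $L_k$ with $k \neq i$) turns this into a disk bounded by $L_i$ that is still disjoint from every parallel $L_k^{(j')}$ with $k \neq i$; if I can further make it disjoint from the cores $L_k$ themselves, then $L_i$ splits off from $L \setminus L_i$, again contradicting non-splittability of $L$.

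The main obstacle is exactly that last step when some $L_k$ with $k \neq i$ is trivial: the disk's intersection with $T_k = \partial N(L_k)$ can contain essential longitudinal curves that cannot be removed by innermost-disk isotopies, because a longitude \emph{does} bound a meridian disk in the complementary solid torus of the unknot $L_k$. I would handle this either by iteratively applying the annulus argument to these sub-disks (producing a cascade that forces a splitting elsewhere), or more cleanly by recasting Part [2] as a gluing statement: the exterior of the parallel link decomposes as $E = S^3 \setminus N(L)$ glued along the tori $T_i$ to the cable spaces $N(L_i) \setminus \bigsqcup_j N(L_i^{(j)})$, and one checks that $E$ is irreducible and each $T_i$ is incompressible in $E$ (both following from $L$ being non-split), while the cable spaces are irreducible with incompressible boundary, so the standard principle that gluing irreducible pieces along incompressible surfaces yields an irreducible manifold gives the non-splittability of the parallel link.
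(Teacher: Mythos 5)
Your Part [1] follows essentially the same route as the paper: place the parallels on an annulus $A$, minimize $|S\cap A|$ over isotopies of the splitting sphere $S$, remove inessential circles by innermost-disk moves, and observe that an essential circle of $S\cap A$ is a knot of type $K$ bounding a disk, which unknots $K$; the empty-intersection case is excluded by irreducibility of the knot exterior. (Your capping of $D_S$ by a sub-annulus is not needed: the essential circle already bounds an embedded disk on the sphere $S$ itself, and since it is isotopic to $K$ that alone gives the contradiction.)

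Part [2] is where you genuinely diverge, and where your route hits an obstacle the paper never meets. Your first case (each family of parallels entirely on one side of $S$) coincides with the paper's. But when two parallels $l_1,l_2$ of the \emph{same} component $K_i$ lie on opposite sides of $S$, the paper does not rerun the annulus/disk argument at all. It simply chooses one representative $l_k$ of each other family $k\neq i$ (these exist since $c\geq 2$) and notes that $l_1\cup\bigcup_{k\neq i}l_k$ and $l_2\cup\bigcup_{k\neq i}l_k$ are each ambient isotopic to $L$, hence non-splittable, hence each lies entirely on one side of the sphere $S$ disjoint from them; the first copy forces every $l_k$ into the side containing $l_1$, the second into the side containing $l_2$, a contradiction. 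This elementary argument is completely insensitive to whether the individual components $K_k$ are knotted, which is precisely where your direct disk-pushing approach breaks down (the longitude of an unknotted $K_k$ bounding a meridian disk cannot be removed by innermost moves, as you note). The gap you acknowledge is therefore real for your first route, and the proposed ``cascade'' is too vague to count as a proof; your second route via irreducibility of the exterior obtained by gluing the link exterior to the cable spaces along incompressible tori does work, but it invokes substantially heavier machinery (the standard gluing principle for irreducible manifolds along incompressible surfaces) than the paper's three-line sublink argument. The missing idea is that the untouched families $k\neq i$ already serve as witnesses pinning the two copies of $L$ to opposite sides of $S$.
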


\begin{proof}

[1] Let $K^n$ be an $n$-parallel of a non-trivial knot $K$. 
Suppose that $K^n$ is splittable, 
that is, there exists a $2$-sphere $S$ in $S^3-K^n$ 
such that $S$ do not bound any $3$-ball in $S^3-K^n$. 
On the other hand, 
there exists an embedded annulus $A$ in $S^3$ such that $K^n \subset A$ 
and the core of $A$ is parallel to the components of $K^n$. 

By isotopy of $S$, let $S\cap A$ be minimized.
If $S \cap A$ is the empty set 
then it is in contradiction with the definition of $S$, 
for any knot complement in $S^3$ is irreducible. 

We suppose otherwise, that is, $S\cap A$ is not the empty set. 
We consider a component $C$ of $S\cap A$ on $A$.
Then there are two possibilities;
(1) $C$ is trivial on $A$, that is , $C$ bounds a disk on $A$, or\\
(2) $C$ is parallel to a component of $K^n$.

In the case (1), 
$C$ can be removed by isotopy of $S$. 
That is in contradiction with that $S\cap A$ minimized.

In the case (2), 
since $C$ bounds a disk in $S$, 
$K$ must be trivial. 
This also gives a contradiction.

Therefore we see $K^n$ is non-splittable.

\bigskip

\noindent
[2] From a link $L=K_1 \cup\cdots\cup K_c$ with $c$ at least 2, 
we obtain a parallel $L^{(n_1,\cdots ,n_c)}=K^{n_1}_1 \cup\cdots\cup K^{n_c}_c$. 

We assume $L^{(n_1,\cdots ,n_c)}$ is splittable. 
Then there exists a $2$-sphere in $S^3-L^{(n_1,\cdots ,n_c)}$  
such that $S^3=B_1\cup B_2$ and $L^{(n_1,\cdots ,n_c)}=L_1 \cup L_2$ with a link $L_i \subset B_i$ for $i=1,2$. 
Here we take a component $l_1$ from $L_1$ and $l_2$ from $L_2$.

In the case that $l_1 \subset K^{n_i}_i$ and $l_2 \subset K^{n_j}_j$ with $i\neq j$. 
Take $l_k \subset K^{n_k}_k$ for each $k\neq i,j$. 
Then we see $L'=(l_1\cup \l_2)\cup \bigcup_{k\neq i,j}l_k$ 
is equivalent to $L^{(n_1,\cdots ,n_c)}$. 
Now $S$ splits $l_1$ and $l_2$. 
From $L'$ is ambient isotopy to $L^{(n_1,\cdots ,n_c)}$, that is contradictory with the assumption.

In the case that $l_1, l_2 \subset K^{n_i}_i$. 
Take $l_k \subset K^{n_k}_k$ with $k\neq i$. 
By $l_1 \subset B_1$, 
together with the assumption that $L$ is non-splittable, 
the link $l_1\cup \bigcup_{k\neq i} l_k$ is contained in $B_1$. 
On the other hand, 
for $l_2$, 
we have $l_2 \cup \bigcup_{k\neq i}l_k$ is contained in $B_2$. 
That is contradictory to each other. 
Therefore $L^{(n_1,\cdots ,n_c)}$ is non-splittable.
\end{proof}

\begin{proof}[Proof of Theorem \ref{thmparallel}]
[1] Let $K$ be a non-trivial knot in $S^3$. 
First we give an orientation to the knot $K$. 
Let us take a diagram $D$ of $K$. 
Consider the $2$-parallel $K^2$ obtained from $D$. 
Then the $2$-parallel $K^2=K_1\cup K_2$ admits the orientation 
induced from that of $K$. 
Suppose that $K^2$ is untwisted, 
i.e., the linking number of $K_1$ and $K_2$ is $0$.
Then we see the writhe of $D$ is $0$ by Lemma \ref{lem1}. 
Note that $D^2$ has parallel arcs such that the crossings in both ends have positive signs,  
there exists the same number of parallel arcs such that the crossings in both ends have negative signs 
as shown in Figure \ref{same}.

\begin{figure}[H]
\begin{center}
\includegraphics[width=9cm,clip]{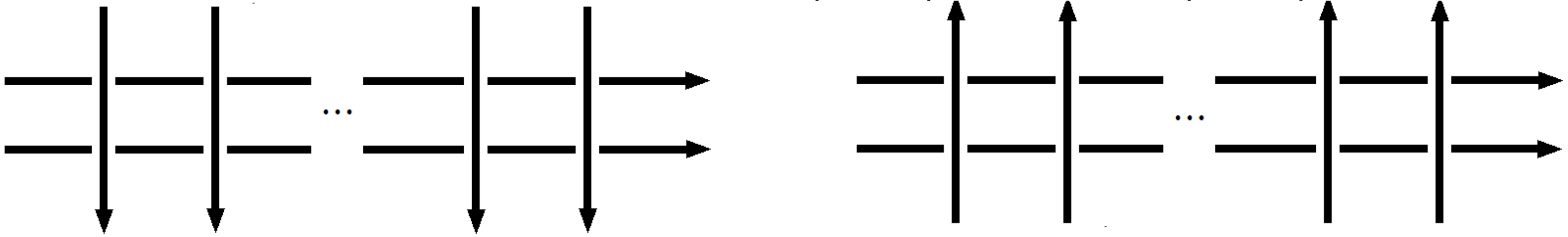}
\caption{}\label{same}
\end{center}
\end{figure}

Here we add a full-twist to the parallel arcs with the sign 
as shown in Figure \ref{r1a} and Figure \ref{r1b}. 

\begin{figure}[H]
\begin{center}
\includegraphics[width=10cm,clip]{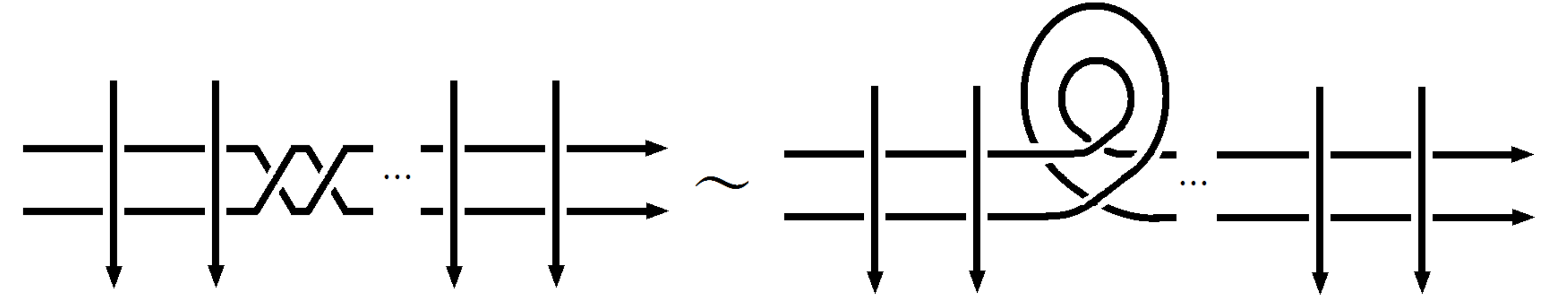}
\caption{}\label{r1a}
\includegraphics[width=10cm,clip]{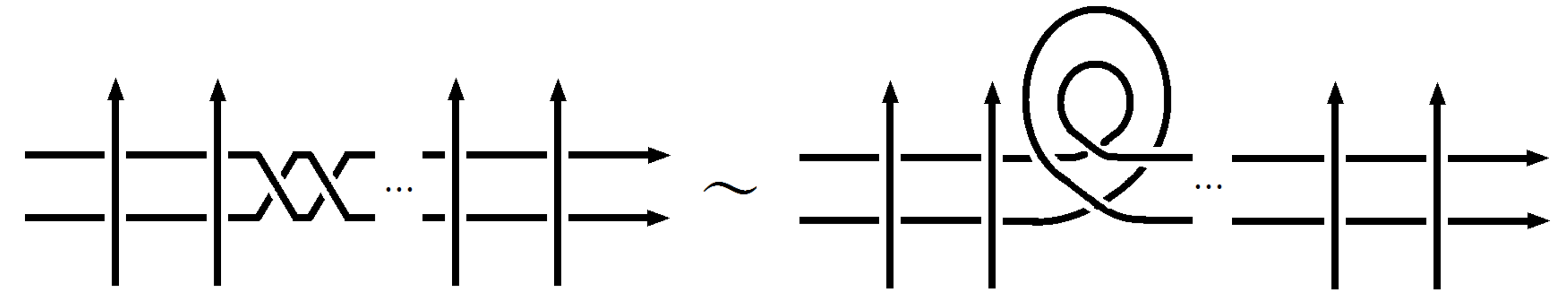}
\caption{}\label{r1b}
\end{center}
\end{figure}another

Since the writhe of $D$ is $0$, 
there exist positive crossings 
as many as negative crossings on $D$. 
Therefore the diagram obtained by this modification 
is equivalent to $D^2$. 
We see that $D^2$ represents $K^2$.
In the following, 
we will use the same notation $D^2$ to denote the modified diagram for convenience. 

We set the colors (integers) $a$ and $a+d$ to a pair of parallel arcs on $D^2$. 
That is $d$ is a difference of colors of two parallel arcs. 
We give the colors to remaining arcs around the arcs to satisfy the condition of $\mathbb{Z}$-coloring.  
See Figure \ref{diff}. 
Then $d$ stays constant after passing under another two parallel arcs. 

\begin{figure}[H]
\begin{center}
\includegraphics[width=4cm,clip]{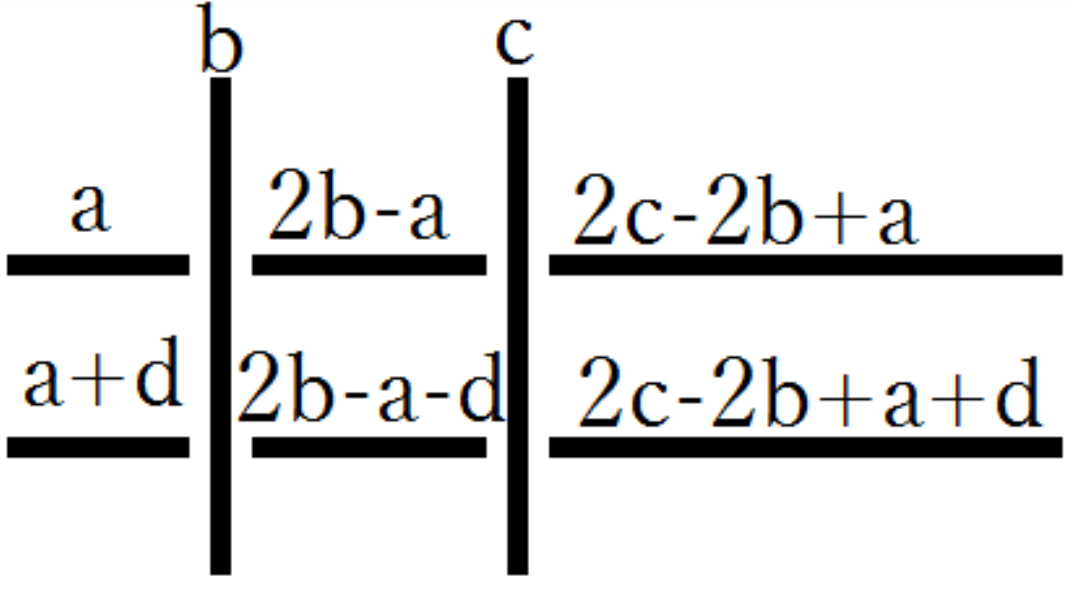}
\caption{}\label{diff}
\end{center}
\end{figure}

Therefore, $D$ has only crossings shown in Figures \ref{2+} and \ref{2-}. 
There $x_i$ and $y_i$ with $i=1,2$ are the arcs of the same component.

\begin{figure}[H]
\begin{center}
\includegraphics[height=3cm,clip]{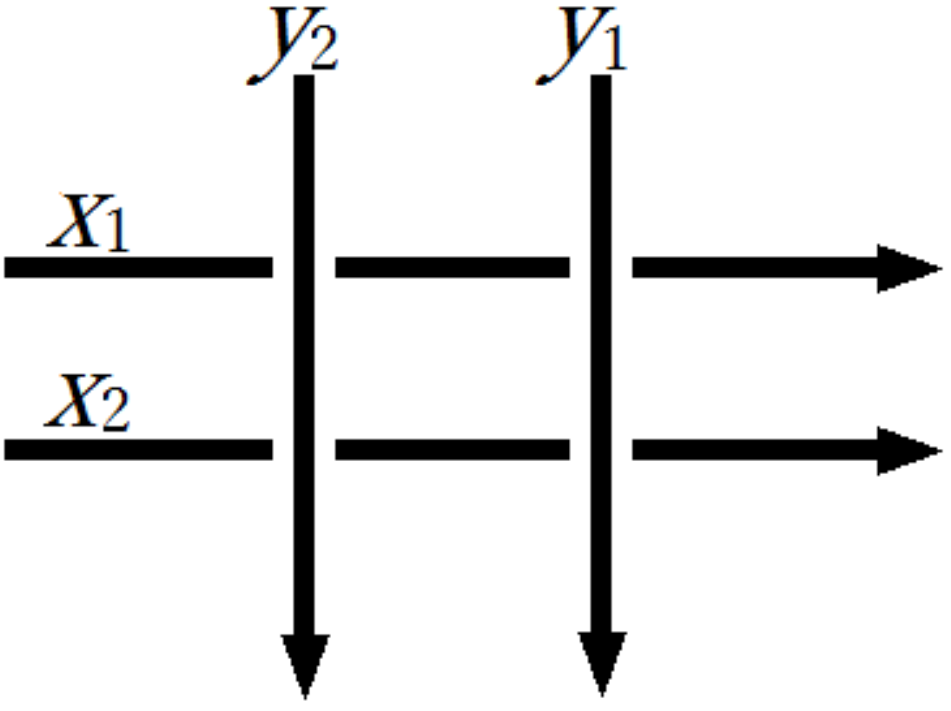}
\caption{}\label{2+}
\includegraphics[height=3cm,clip]{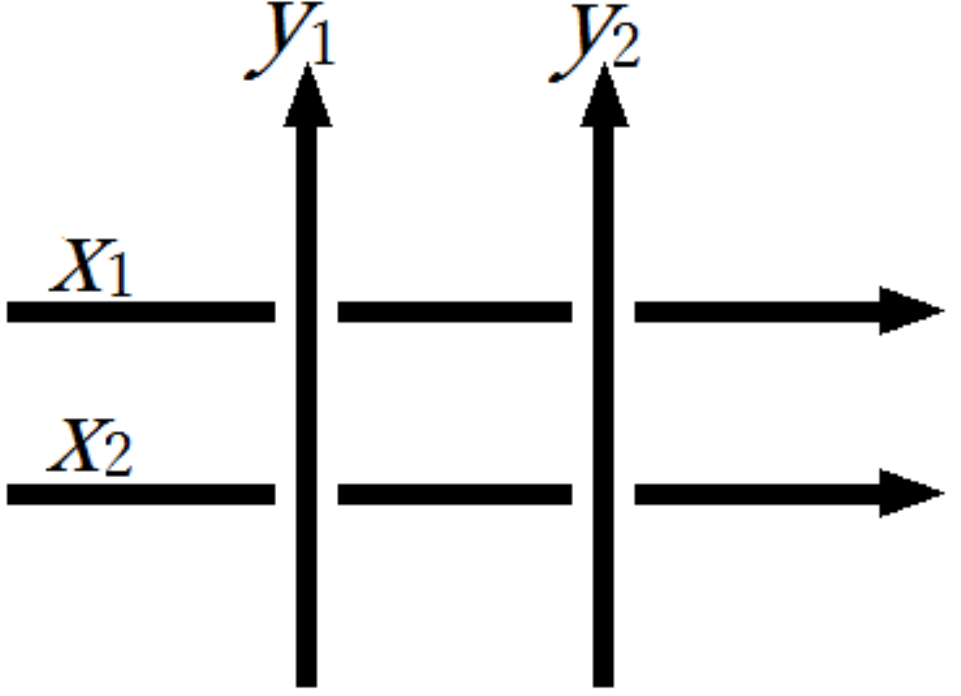}
\caption{}\label{2-}
\end{center}
\end{figure} 

They appear alternately with tracing two parallel arcs. 
We fix the colors of two parallel arcs are $0$ and $1$.
For the arc colored by $0$ is changed to be colored by $1$ 
after passing under another two parallel arcs. 
Then we see the colors as shown in Figure \ref{diff2}. 

\begin{figure}[H]
\begin{center}
\includegraphics[height=3cm,clip]{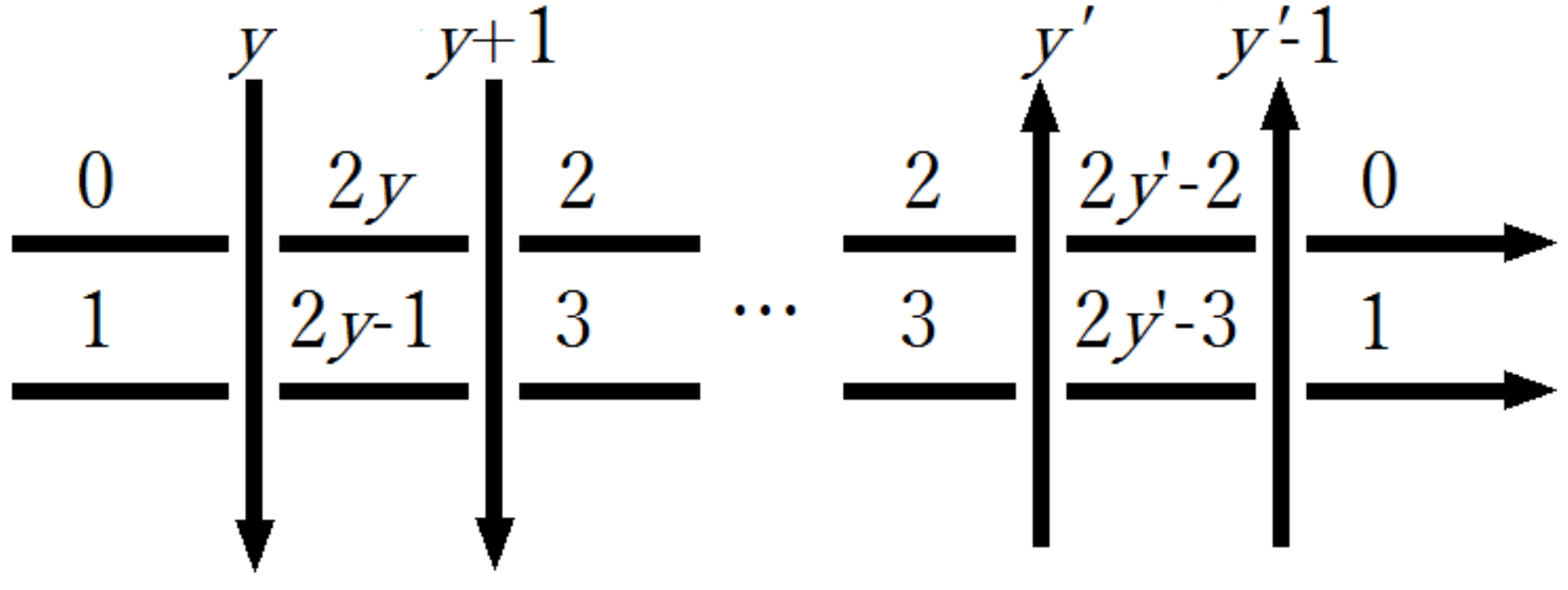}
\caption{}\label{diff2}
\end{center}
\end{figure}
We obtain $y=0, 2$ and $y'=1, 3$. 
We see $2y=0$ or $4$, $2y-1=-1$ or $3$, $2y'-2=0$ or $4$ and $2y'-3=-1$ or $3$. 
We see that 
$D^2$ admits a $\mathbb{Z}$-coloring $C$ such that Im$(C)=\{-1,0,1,2,3,4\}$. 
Therefore $K^2$ is $\mathbb{Z}$-colorable.\\

Here we focus on the arc colored by $4$ or $-1$. 
We can get the $\mathbb{Z}$-coloring without the colors $4$ and $-1$ 
by changing the diagram and the coloring obtained above 
as shown in Figure \ref{del4} and Figure \ref{del-1}.
\begin{figure}[H]
\begin{center}
\includegraphics[height=2cm,clip]{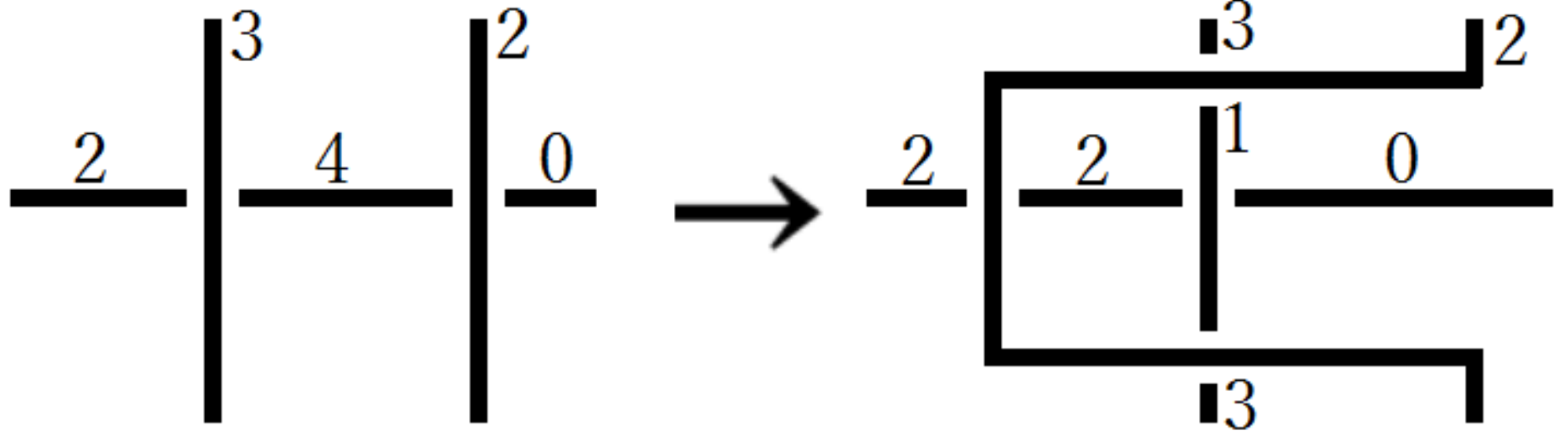}
\caption{}\label{del4}
\end{center}
\end{figure}
\begin{figure}[H]
\begin{center}
\includegraphics[height=2cm,clip]{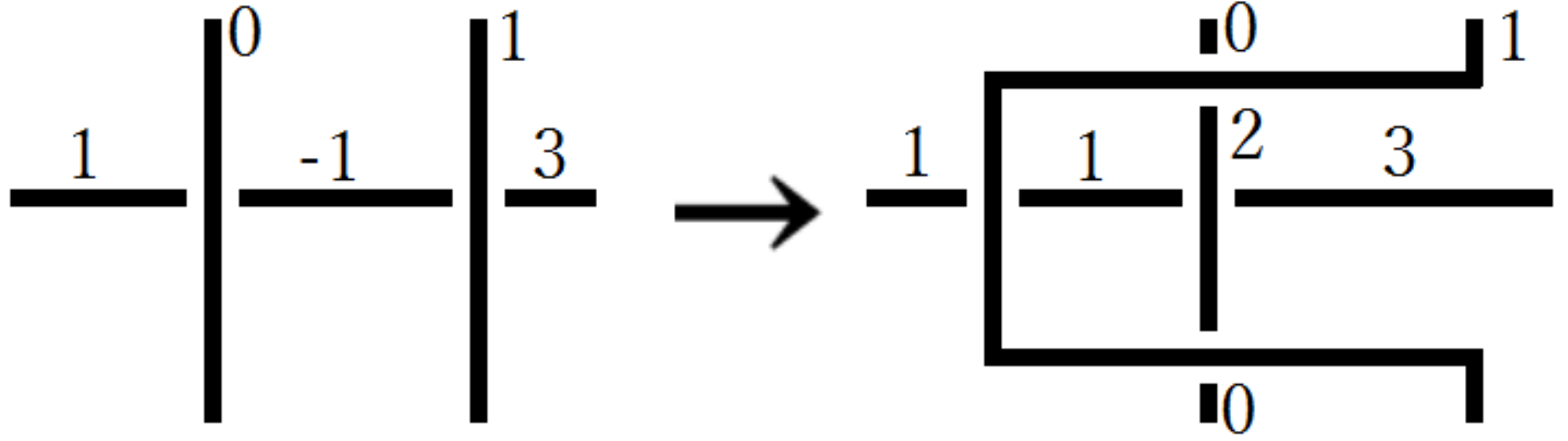}
\caption{}\label{del-1}
\end{center}
\end{figure}
We see that $K^2$ also admits the coloring $C'$ such that Im$(C')=\{0,1,2,3\}$. 
%
By Lemma \ref{lem11}, $K^2$ is non-splittable. 
By Theorem \ref{main}, the obtained diagram is a minimally $\mathbb{Z}$-colorable diagram, 
which is locally equivalent to $D^2$.


%
%
%
%

\bigskip

\noindent
[2] Let $D^{(n_1,\cdots ,n_c)}$ be a diagram of $L^{(n_1,\cdots ,n_c)}$. 
The diagram $D^{(n_1,\cdots ,n_c)}$ has only crossings 
shown in Figure \ref{N-parallel}.

\begin{figure}[H]
\begin{center}
\includegraphics[height=4cm,clip]{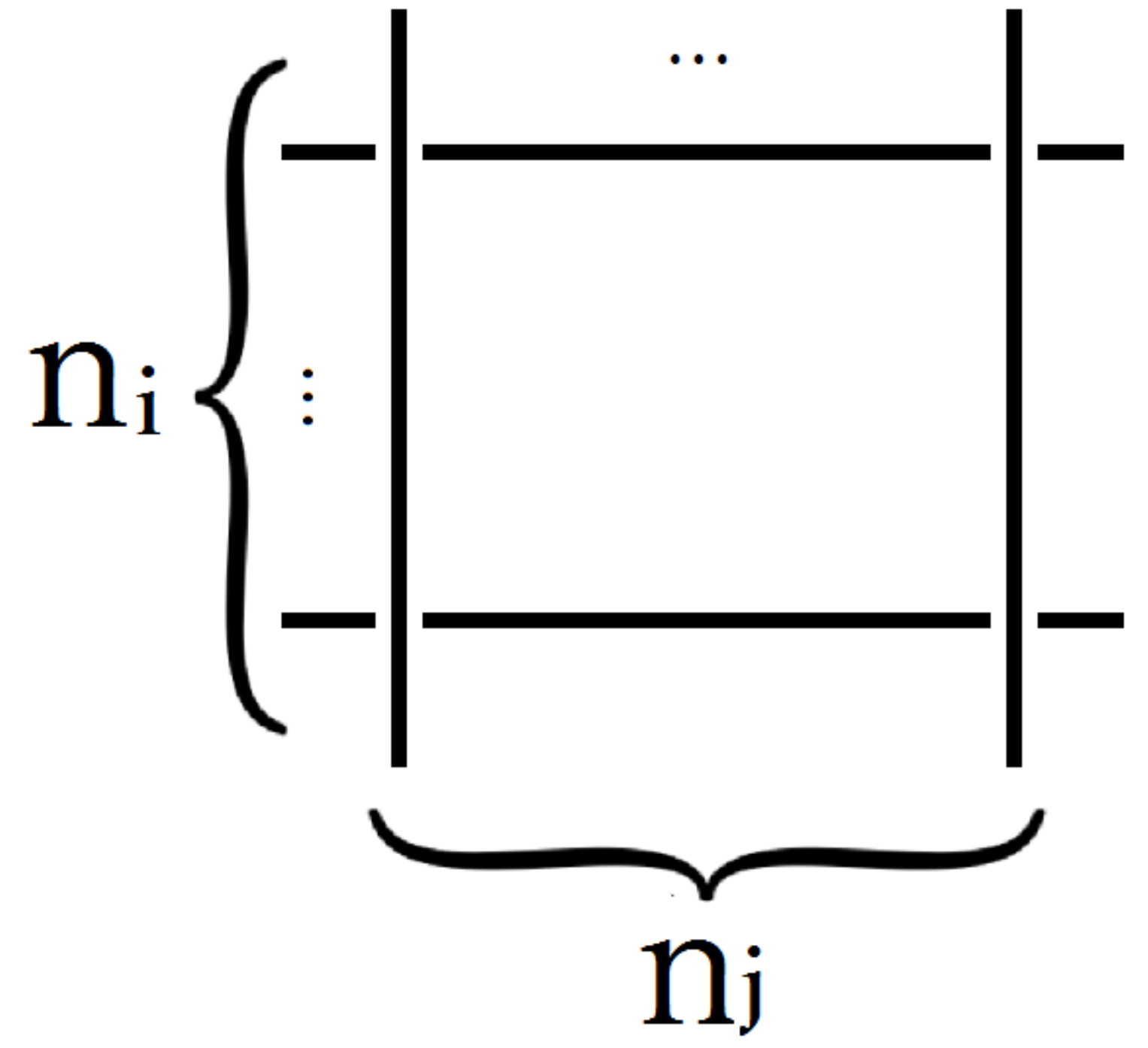}
\caption{}\label{N-parallel}
\end{center}
\end{figure}

We put a circle as fencing the crossings as shown in Figure \ref{circle}.

\begin{figure}[H]
\begin{center}
\includegraphics[height=5cm,clip]{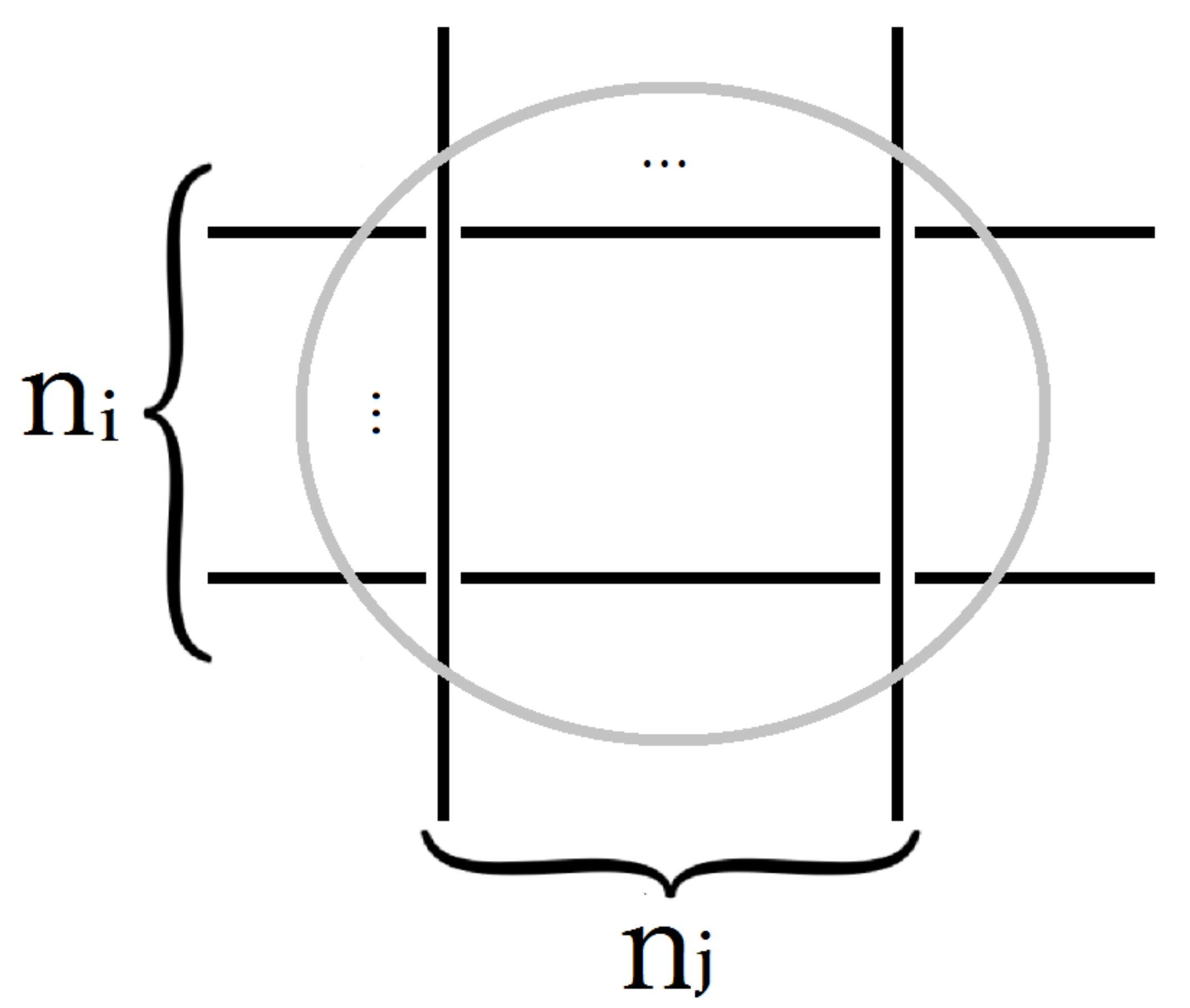}
\caption{}\label{circle}
\end{center}
\end{figure}

Note that each crossing of $D^{(n_1,\cdots,n_k)}$ is contained 
in some of the regions encircled.

For any parallel family of arcs $\{a_1,\cdots,a_k\}$ outside the regions, 
we fix the colors of  $a_{k/2}$ and $a_{k/2+1}$ are $1$ and others are $0$ as shown in Figure \ref{out}. \\

\begin{figure}[H]
\begin{center}
\includegraphics[height=4cm,clip]{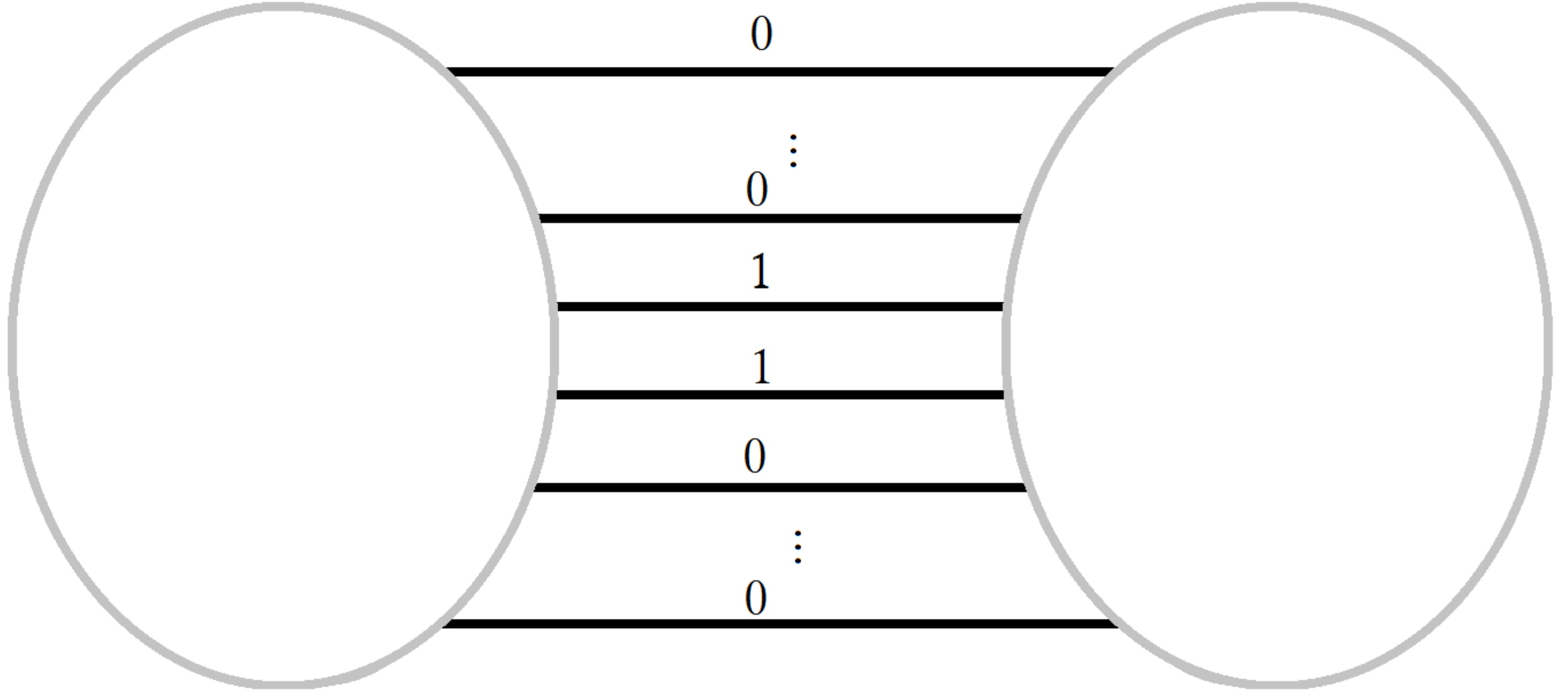}
\caption{}\label{out}
\end{center}
\end{figure}

For the arcs inside the region, we assign colors as follows. \\

In the case $n_j=4m$ for some integer $m$,
we assign the colors $-1,0,1,2,3$ as shown in Figure \ref{4m}.

\begin{figure}[H]
\begin{center}
\includegraphics[height=7cm,clip]{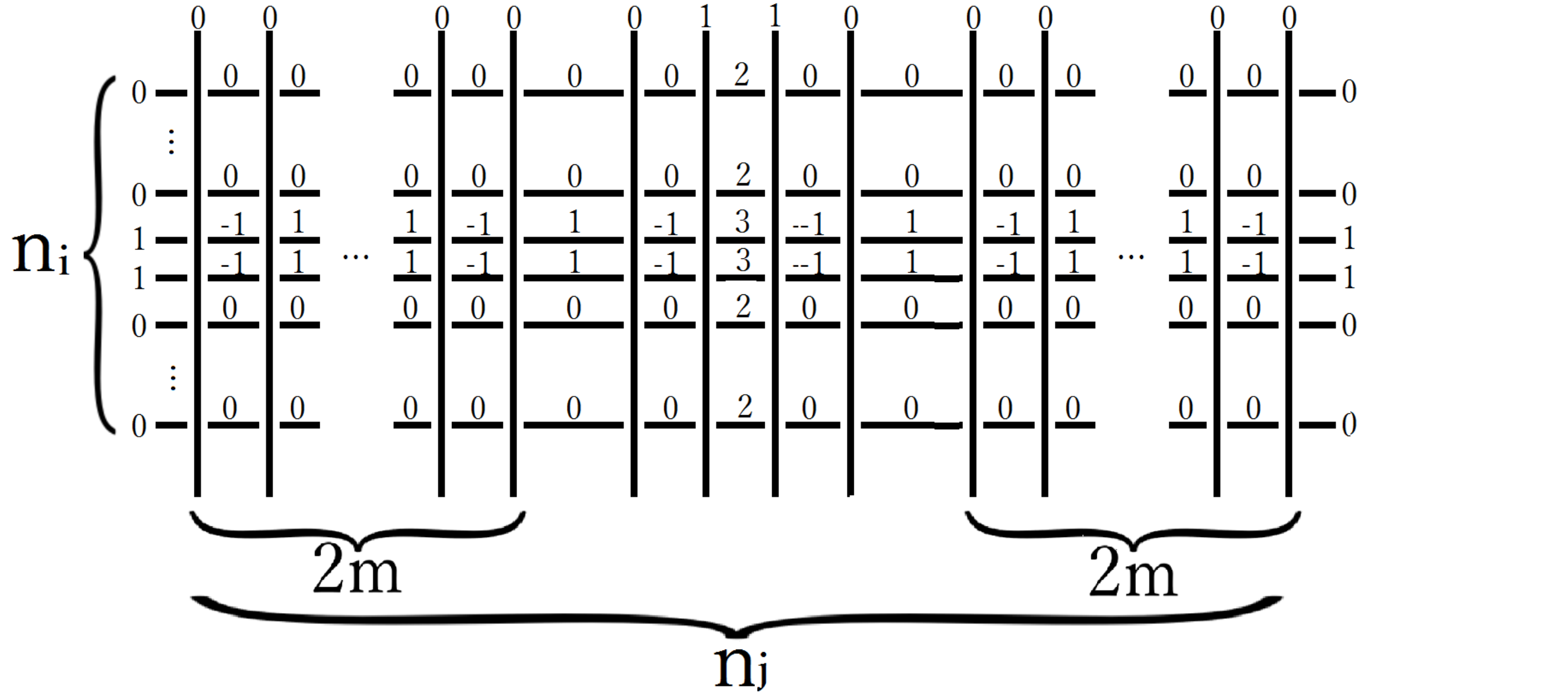}
\caption{}\label{4m}
\end{center}
\end{figure}

Then, at each crossing inside the region, 
the obtained coloring satisfy the condition of $\mathbb{Z}$-coloring. 
Here the colors of the arcs intersecting the circle 
are compatible to those of the arcs outside. \\

In the case $n_j=4m+2$, in the same way, 
we assign the colors $-1,0,1,2$ as shown in Figure \ref{4m+2}.

\begin{figure}[H]
\begin{center}
\includegraphics[height=7cm,clip]{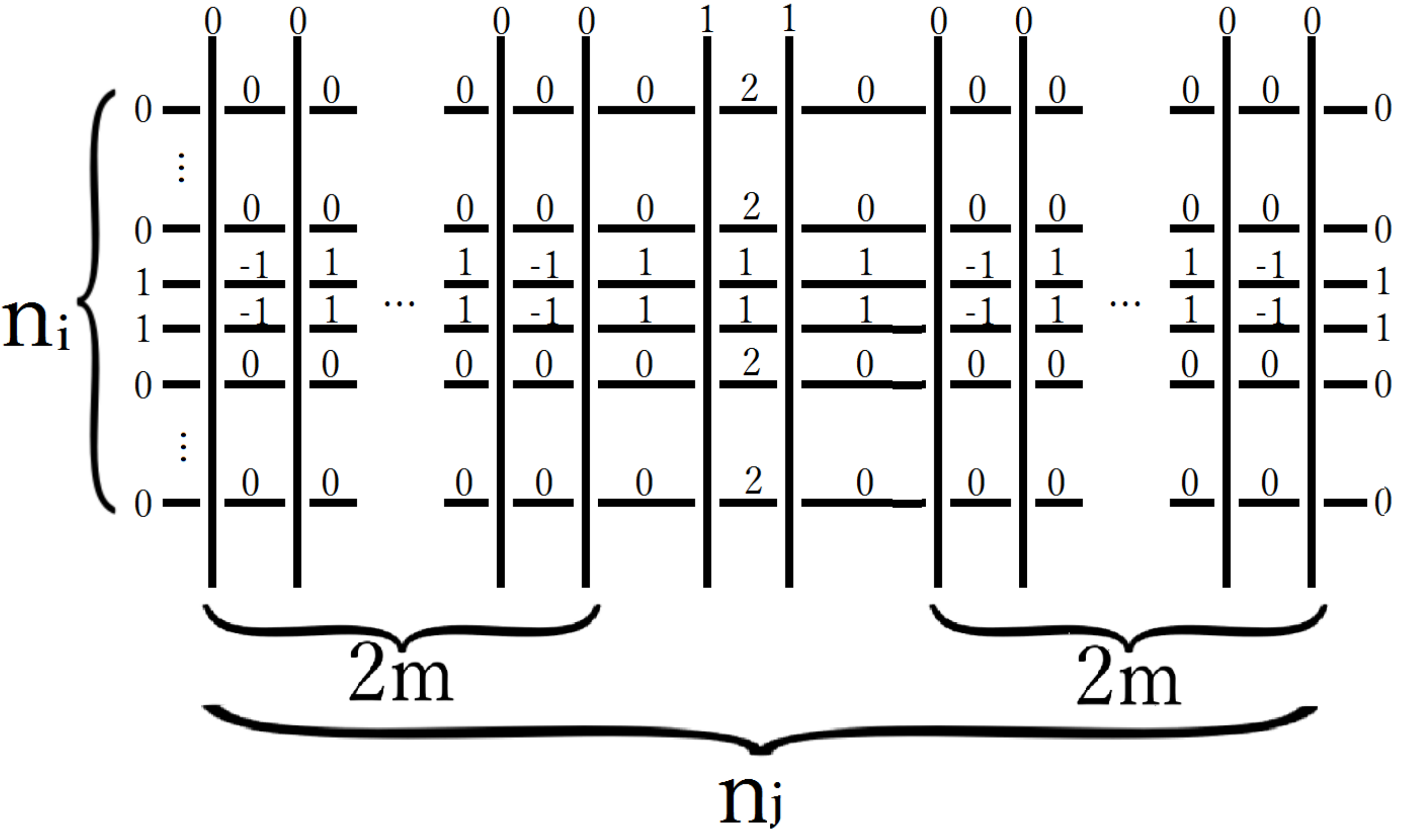}
\caption{}\label{4m+2}
\end{center}
\end{figure}

Then, at each crossing inside the region, 
the obtained coloring satisfy the condition of $\mathbb{Z}$-coloring. 
Here the colors of the arcs intersecting the circle are 
compatible to those of the arcs outside. 

We see that 
$D^{(n_1,\cdots ,n_c)}$ admits a $\mathbb{Z}$-coloring $C$ 
such that Im$(C)=\{-1,0,1,2\}$ or $\{-1,0,1,2,3\}$. 
Therefore $L^{(n_1,\cdots ,n_c)}$ is $\mathbb{Z}$-colorable.\\

Moreover, in the case that the colors $3$ appears,
we delete the color $3$ by changing the diagram and the coloring 
as shown in Figure \ref{2-del3}.

\begin{figure}[H]
\begin{center}
\includegraphics[height=2cm,clip]{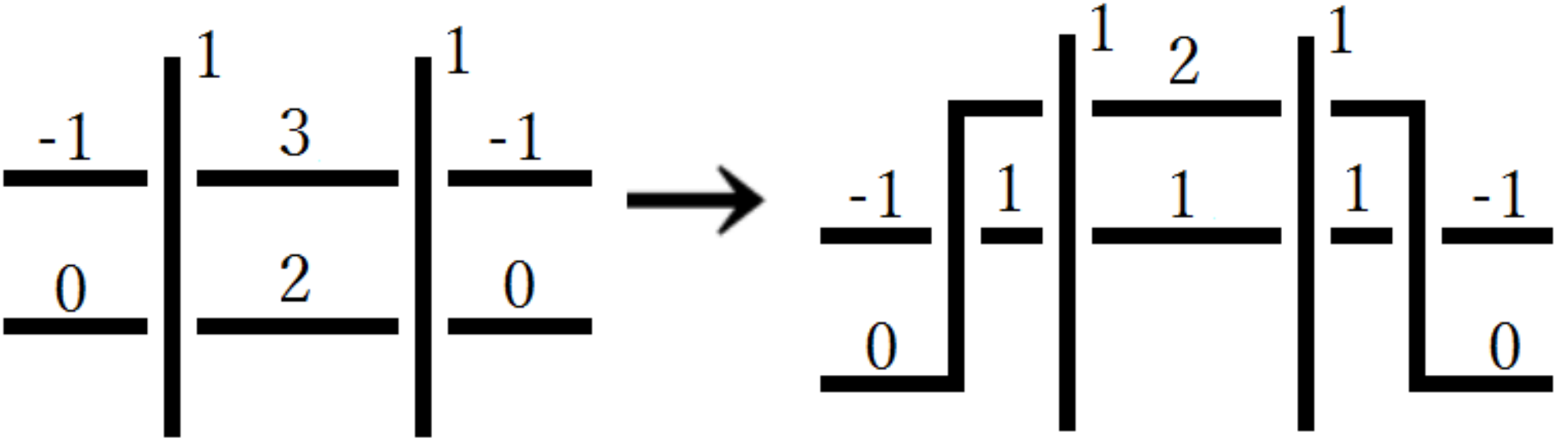}
\caption{Delete the color $3$}\label{2-del3}
\end{center}
\end{figure}

Therefore there exists the coloring $C'$ such that Im$(C')=\{-1,0,1,2\}$. 
%
%

Since we are assuming that L is non-splittable and Lemma \ref{lem11}, 
it can be shown that the parallel is non-splittable. 
By Theorem \ref{main}, the obtained diagram is a minimally $\mathbb{Z}$-colorable diagram, 
which is locally equivalent to $D^{(n_1,\cdots ,n_c)}$.

\end{proof}

\section{Acknowledgement}
I would like to thank Meiqiao Zhang for useful discussions. 
I would also like to thank Akio Kawauchi 
for giving me the motivation that I consider about parallels. 
I am grateful to Kouki Taniyama 
for pointing out the necessity of Lemmas \ref{lem11}. 
I am also grateful to Ayumu Inoue, Takuji Nakamura and Shin Satoh 
for useful discussions and advices. 
I would also like to Kazuhiro Ichihara for his supports and advices.



\end{document}